\documentclass{article}

\usepackage{amsmath}
\usepackage{amsthm}
\usepackage{amssymb}
\usepackage{amsfonts}

\usepackage{subeqnarray}
\usepackage{bm}
\usepackage{tikz}
\usepackage{dsfont}
\usepackage{upgreek}
\usepackage{booktabs}
\usepackage{fancyhdr}
\usepackage{multirow}
\usepackage{multicol}
\usepackage{float}
\usepackage{enumerate}
\usepackage{cases}
\usepackage{array}
\usepackage{arydshln} 

\usepackage[ruled,vlined,linesnumbered,inoutnumbered,algo2e]{algorithm2e}

\usepackage{bbding}
\usepackage{caption}
\usepackage{todonotes}

\usepackage{mathrsfs}

\usepackage{lineno}

\usepackage[top=2.5cm, bottom=2.5cm, left=3cm, right=3cm]{geometry} 

\usepackage{graphics,graphicx,epsf,epstopdf,subfigure}
\graphicspath{{./Figures/}}
\ifpdf
\DeclareGraphicsExtensions{.eps,.pdf,.png,.jpg}
\else
\DeclareGraphicsExtensions{.eps}
\fi

\usepackage[breaklinks,colorlinks,linkcolor=black,citecolor=black,urlcolor=black,hypertexnames=false]{hyperref}

\providecommand{\U}[1]{\protect\rule{.1in}{.1in}}

\newtheorem{theorem}{Theorem}[section]
\newtheorem{corollary}[theorem]{Corollary}
\newtheorem{lemma}[theorem]{Lemma}
\newtheorem{proposition}[theorem]{Proposition}
\newtheorem{definition}[theorem]{Definition}
\newtheorem{remark}{Remark}
\newtheorem{assumption}{Assumption}

\numberwithin{equation}{section}

\newcommand{\tr}{\mathrm{tr}}

\newcommand{\proj}{\mathsf{\Pi}}

\newcommand{\retr}{\mathscr{R}}
\newcommand{\grad}{\mathsf{grad}}

\newcommand{\bN}{\mathbb{N}}
\newcommand{\bR}{\mathbb{R}}

\newcommand{\cM}{\mathcal{M}}

\newcommand{\cT}{\mathcal{T}}

\newcommand{\cW}{\mathcal{W}}

\newcommand{\sfD}{\mathsf{D}}

\newcommand{\Rn}{\mathbb{R}^{n}}
\newcommand{\Rm}{\mathbb{R}^{m}}

\newcommand{\Rnn}{\mathbb{R}^{n \times n}}  
\newcommand{\Rnm}{\mathbb{R}^{n \times m}}
\newcommand{\Rmn}{\mathbb{R}^{m \times n}}

\newcommand{\Rns}{\mathbb{R}^{n \times s}}

\newcommand{\Ons}{\mathcal{O}^{n,s}}
\newcommand{\Onn}{\mathcal{O}^{n}}

\newcommand{\Opns}{\mathcal{O}^{n, s}_{+}}

\newcommand{\Sphn}{\mathcal{S}^{n - 1}}

\newcommand{\zz}{^{\top}}

\newcommand{\ff}{_{\mathrm{F}}}
\newcommand{\fs}{^2_{\mathrm{F}}}

\newcommand{\uast}{^{\ast}}

\newcommand{\uprime}{^{\prime}}

\newcommand{\lsfR}{_{\mathsf{R}}}

\newcommand{\dkh}[1]{\left(#1\right)}
\newcommand{\hkh}[1]{\left\{#1\right\}}
\newcommand{\fkh}[1]{\left[#1\right]}
\newcommand{\jkh}[1]{\left\langle#1\right\rangle}
\newcommand{\norm}[1]{\left\|#1\right\|}
\newcommand{\abs}[1]{\left\lvert #1\right\rvert}

\allowdisplaybreaks
\graphicspath{ {./results/} }
\definecolor{Gray}{rgb}{0.5,0.5,0.5}
\DeclareMathOperator*{\argmin}{arg\,min}

\makeatletter

\newcommand{\Rmnum}[1]{\expandafter\@slowromancap\romannumeral #1@}
\makeatother

\newcommand*\patchAmsMathEnvironmentForLineno[1]{
	\expandafter\let\csname old#1\expandafter\endcsname\csname#1\endcsname
	\expandafter\let\csname oldend#1\expandafter\endcsname\csname end#1\endcsname
	\renewenvironment{#1}
	{\linenomath\csname old#1\endcsname}
	{\csname oldend#1\endcsname\endlinenomath}
}
\newcommand*\patchBothAmsMathEnvironmentsForLineno[1]{
	\patchAmsMathEnvironmentForLineno{#1}
	\patchAmsMathEnvironmentForLineno{#1*}
}
\AtBeginDocument{
	\patchBothAmsMathEnvironmentsForLineno{equation}
	\patchBothAmsMathEnvironmentsForLineno{align}
	\patchBothAmsMathEnvironmentsForLineno{flalign}
	\patchBothAmsMathEnvironmentsForLineno{alignat}
	\patchBothAmsMathEnvironmentsForLineno{gather}
	\patchBothAmsMathEnvironmentsForLineno{multline}
	\patchAmsMathEnvironmentForLineno{aligned}
}

\title{An Adaptive Smoothing Algorithm for Non-Lipschitz Optimization on Manifolds with Complexity Guarantees\thanks{This work is supported by RGC grant PolyU15300024, JLFS/P-501/24, Croucher Funding Scheme for CAS AMSS-PolyU Joint Laboratory in Applied Mathematics.}}

\author{
	Lei Wang\footnote{Department of Applied Mathematics, The Hong Kong Polytechnic University, Hung Hom, Kowloon, Hong Kong, China (\href{mailto:wlkings@lsec.cc.ac.cn}{wlkings@lsec.cc.ac.cn}).}
	\and
	Xiaojun Chen\footnote{Department of Applied Mathematics, The Hong Kong Polytechnic University, Hung Hom, Kowloon, Hong Kong, China (\href{mailto:maxjchen@polyu.edu.hk}{maxjchen@polyu.edu.hk}).}
}

\date{}

\begin{document}

\maketitle

\begin{abstract}
	We study a class of optimization problems on Riemannian manifolds, where the objective function consists of a smooth term and quasi-norm type penalties with exponent $p \in (0, 1]$. The essential difficulty lies in the fact that the objective function may not be locally Lipschitz continuous, which places this type of problems beyond the reach of existing  Riemannian techniques. To overcome this obstacle, this paper constructs a general smoothing framework and establishes fundamental properties for developing efficient algorithms. In particular, we propose a smoothing Riemannian gradient algorithm equipped with a smoothing-aware AdaGrad-type stepsize rule. Its global convergence is demonstrated together with an iteration complexity of $O (\epsilon^{p - 4})$, which includes the best available iteration complexity of $O (\epsilon^{- 3})$ for Lipschitz problems with $p = 1$ as a special case. To the best of our knowledge, this is the first complexity result for non-Lipschitz optimization on Riemannian manifolds. Preliminary numerical experiments corroborate the practical efficiency of the proposed approach in real-world applications arsing from machine learning and data science. 
\end{abstract}


\section{Introduction}

In this paper, we focus on the following Riemannian optimization problem with possibly non-Lipschitz singularities,
\begin{equation}
	\label{opt:rnlip}
	\min_{X \in \cM} \hspace{2mm} f (X) := g (X) + \sum_{i = 1}^{m} \fkh{ \varphi (h_i (X)) }^{p},
\end{equation}
where $\cM$ is a compact embedded submanifold in $\Rns$, $g: \Rns \to \bR$, $h_i: \Rns \to \bR$, and $\varphi: \bR \to \bR_{+}$ are three functions, and $p \in (0, 1]$ is a constant.
Throughout this paper, we make the following blanket assumptions on problem \eqref{opt:rnlip}.

\begin{assumption}
	\label{asp:function}
	
	\mbox{}
	
	\begin{enumerate}[(i)]
		
		\item The functions $g: \Rns \to \bR$ and $h_i: \Rns \to \bR$ are continuously differentiable.
		Moreover, their Euclidean gradients $\nabla g$ and $\nabla h_i$ are Lipschitz continuous over $\cM$ with the corresponding Lipschitz constants $L_g \geq 0$ and $L_h \geq 0$, respectively.
		
		\item The function $\varphi: \bR \to \bR_{+}$ is locally Lipschitz continuous.
		
	\end{enumerate}
\end{assumption}

When $p = 1$, the objective function of problem~\eqref{opt:rnlip} is locally Lipschitz continuous.
Problems of this kind have been extensively studied in the existing literature.
The picture changes fundamentally, however, when $p \in (0, 1)$.
In this regime, the objective function is no longer locally Lipschitz continuous, thereby posing a significant challenge to solve problem~\eqref{opt:rnlip}.
Nevertheless, non-Lipschitz regularizers are often more effective in practice at promoting desirable structural properties, particularly sparsity \cite{Bian2013worst,Chen2010lower,Zhang2024riemannian}.
Some applications will be introduced in the next subsection.
The present work is devoted to developing a unified algorithmic framework for the entire range $p \in (0, 1]$, while also establishing its complexity guarantees.

It is notable that the algorithm proposed in this paper can be seamlessly extended to address the following broader class of problems,
\begin{equation*}
	\min_{X \in \cM} \hspace{2mm} f (X) := g (X) + \sum_{i = 1}^{m} \fkh{ \varphi_i (h_i (X)) }^{p_i},
\end{equation*}
where $g$ and $h_i$ satisfy Assumption~\ref{asp:function}, $\varphi_i: \bR \to \bR_{+}$ is a locally Lipschitz continuous function, and $p_i \in (0, 1]$ is a constant.
The theoretical results established in the sequel remain valid for $p = \min_{i \in \{1, 2, \dotsc, m\}} p_i$.
For the sake of clarity, we restrict our attention to the formulation of problem~\eqref{opt:rnlip}.

\subsection{Broad Applications}

Problems of the form \eqref{opt:rnlip} arise from many practical applications in machine learning and data science.
Below we briefly describe several representative instances.

\paragraph{Sparse Principal Component Analysis (SPCA).}
To enhance the interpretability of principal components, SPCA introduces the sparsity structure into the dimension-reduction process \cite{Wang2025decentralized,Wang2025distributionally}.
This problem serves as a critical procedure or preprocessing step in numerous statistical learning tasks \cite{Chen2016augmented,Wang2023smoothing}.
Let $\Ons := \{X \in \Rns \mid X\zz X = I_s\}$ be the Stiefel manifold \cite{Wang2022decentralized,Wang2024seeking} in $\Rns$ with $1 \leq s < n$.
Given a data matrix $A \in \Rnm$, SPCA identifies an orthogonal transformation by solving the optimization problem below,
\begin{equation}
	\label{opt:spca}
	\min_{X \in \Ons} \hspace{2mm} - \dfrac{1}{2} \tr \dkh{X\zz A A\zz X} + \lambda \norm{X}_1,
\end{equation}
where the $\ell_1$-norm of $X$ is defined as $\norm{X}_1 = \sum_{i = 1}^{n} \sum_{j = 1}^{s} |[X]_{i, j}|$ with $[X]_{i, j}$ being the $(i, j)$-th entry of $X$, and $\lambda > 0$ is a constant used to control the amount of sparseness.
It is readily seen that model~\eqref{opt:spca} falls within the scope of problem~\eqref{opt:rnlip} upon choosing $\varphi (t) = \abs{t}$ and $p = 1$.

\paragraph{Dual Principal Component Pursuit (DPCP).}
This problem aims to seek the sparsest vector in an $n$-dimensional linear subspace $\cW \subseteq \Rm$ with $m > n$, which finds interesting applications in robust subspace recovery \cite{Tsakiris2018dual}.
Let $E \in \Rmn$ be a matrix whose columns form an orthonormal basis of $\cW$.
Motivated by the fact that the $\ell_p$ quasi-norm ($0 < p < 1$) has great potential to promote sparsity, Zhang et al. \cite{Zhang2024riemannian} propose the following optimization model for the DPCP problem,
\begin{equation}
	\label{opt:dpcp}
	\min_{x \in \Sphn} \hspace{2mm}
	\norm{E x}_p^p,
\end{equation}
where $\Sphn = \{x \in \Rn \mid x\zz x = 1\}$ is the unit sphere in $\Rn$, and the $\ell_p$ quasi-norm of a vector $z \in \Rm$ is defined as $\norm{z}_p^p = \sum_{i = 1}^{m} \abs{[z]_i}^p$ with $[z]_i$ being the $i$-th entry of $z$.
It is evident that model~\eqref{opt:dpcp} constitutes a particular example of problem~\eqref{opt:rnlip} with $\varphi (t) = \abs{t}$ and $p \in (0, 1)$.

\paragraph{Sparse Dictionary Learning (SDL).}
As a classical representation learning paradigm, SDL has been extensively studied and widely applied in signal and image processing \cite{Zhai2020complete}, owing to its powerful capability to uncover low-dimensional latent structures embedded in high-dimensional data.
Given a collection of samples $Y = [Y_1, Y_2, \dotsc, Y_m] \in \Rnm$ with each column $Y_i \in \Rn$ being an individual sample, SDL is concerned with identifying an orthogonal dictionary matrix $X \in \Rnn$ and a sparse coefficient matrix $S \in \Rnm$ such that the observed data admits an approximate factorization $Y \approx X S$.
To tackle this problem, Zhang et al. \cite{Zhang2024riemannian} formulate an $\ell_p$ minimization model $(0 < p < 1)$ of the form
\begin{equation}
	\label{opt:sdl}
	\min_{X \in \Onn} \hspace{2mm}
	\dfrac{1}{m} \sum_{i = 1}^{m} \norm{Y_i\zz X}_p^p,
\end{equation}
where $\Onn := \{X \in \Rnn \mid X\zz X = X X\zz = I_n\}$ denotes the orthogonal group \cite{Absil2008optimization,Boumal2023introduction} and $I_n$ is the $n \times n$ identity matrix.
The above model naturally fits into the framework of problem~\eqref{opt:rnlip} as a special case, once $\varphi$ is specified as $\varphi (t) = \abs{t}$ with $p \in (0, 1)$.

\paragraph{Optimization with Nonnegative and Orthogonal Constraints.}
We consider the following optimization problem with nonnegative and orthogonal constraints \cite{Jiang2023exact,Qian2024error,Wang2025support},
\begin{equation}
	\label{opt:north}
	\min_{X \in \Opns} \hspace{2mm} g (X).
\end{equation}
where $\Opns := \{ X \in \Ons \mid [X]_{i, j} \geq 0 \mbox{ for all } i \mbox{ and } j\}$, and $g: \Rns \to \bR$ is a continuously differentiable function.
This class of problems arises ubiquitously in applications including clustering analysis and feature selection \cite{Chen2026orthogonal,Li2026unsupervised}.
Based on the global error bound of $\Opns$ derived in \cite{Chen2025tight}, we can construct an exact penalty model for problem \eqref{opt:north} as follows,
\begin{equation}
	\label{opt:north-ep}
	\min_{X \in \Ons} \hspace{2mm}
	g (X) + \alpha \sum_{i = 1}^{n} \sum_{j = 1}^{s} \fkh{\max \hkh{- [X]_{i, j}, 0}}^p,
\end{equation}
where $\alpha > 0$ is a penalty parameter and $p \in (0, 1/2]$.
According to \cite[Theorem 5.1]{Chen2025tight}, it follows that problems \eqref{opt:north} and \eqref{opt:north-ep} share the same global minimizers if $\alpha > 5 Q s^{3 / 4}$, where $Q := \max_{X \in \Ons} \norm{\nabla g (X)}\ff$.
Obviously, model~\eqref{opt:north-ep} is a special instance of problem~\eqref{opt:rnlip} by noting that $\varphi (t) = \max \{t, 0\}$ and $p \in (0, 1/2]$.

\subsection{Literature Review}

Riemannian optimization has by now evolved into a compelling and powerful paradigm for treating structured problems under geometric constraints \cite{Absil2008optimization,Boumal2023introduction}.
In recent years, nonsmooth optimization on manifolds has garnered considerable attention due to its wide applications, leading to the development of a broad spectrum of algorithms.
For a class of composite optimization problems on the Stiefel manifold, Chen et al. \cite{Chen2020proximal} introduce the manifold proximal gradient (ManPG) algorithm, which lays the foundation for subsequent advancements \cite{Chen2024nonsmooth}.
Later on, Huang and Wei \cite{Huang2022riemannian} extend the framework of ManPG to general Riemannian manifolds.
It is noteworthy that both algorithms require an exact solution to a subproblem in the tangent space at each iteration.
Their inexact variants are proposed and analyzed in \cite{Huang2023inexact,Jiang2025inexact,Zheng2025new}.
Additionally, Li et al. \cite{Li2021weakly} study the convergence rate of Riemannian subgradient algorithms for minimizing weakly convex functions over the Stiefel manifold.
Some efforts are also devoted to handling nonsmooth optimization problems on manifolds via smoothing techniques, including \cite{Beck2023dynamic,Peng2023riemannian,Wang2025distributionally}.
Moreover, there is a growing body of researches on Riemannian primal-dual frameworks, especially in the form of augmented Lagrangian methods \cite{Deng2025oracle} and ADMM-type methods \cite{Li2025riemannian}.
It deserves particular emphasis that the aforementioned algorithms are predominantly developed under the assumption that the objective function is locally Lipschitz continuous.
Consequently, they are only applicable to problem~\eqref{opt:rnlip} when $p = 1$.

The investigation of problem~\eqref{opt:rnlip} in the case $p \in (0, 1)$ is still in its infancy.
The Riemannian smoothing steepest descent (RSSD) algorithm proposed in \cite{Zhang2024riemannian} proceeds by applying a Riemannian gradient descent scheme to a smooth approximation of the objective function, with the stepsize determined by a line-search procedure.
Although its convergence has been justified, the iteration complexity of RSSD remains largely unexplored.
Beyond this, the line-search mechanism adopted in RSSD incurs a nonnegligible computational burden, as each backtracking step calls for an evaluation of the retraction operator.
Such a limitation significantly impairs the practical performance of RSSD when confronted with large-scale applications.

In contrast to the Riemannian setting, the study of non-Lipschitz optimization in Euclidean spaces is far more extensive.
For example, Bian and Chen \cite{Bian2013worst} propose a smoothing quadratic regularization method and establish worst-case complexity bounds for a class of unconstrained non-Lipschitz optimization problems.
The complexity analysis is further extended in \cite{Bian2015complexity} to the interior point algorithm for non-Lipschitz optimization with box constraints.
For more structured models, Liu et al. \cite{Liu2016smoothing} develop a smoothing sequential quadratic programming framework for composite $L_p (0 < p < 1)$ minimization problems over polyhedra.
Moreover, the complexity results are established in \cite{Chen2019complexity} for partially separable convexly constrained optimization with non-Lipschitz singularities.
This line of work is pushed further by Chen and Toint \cite{Chen2021high}, who derive high-order evaluation complexity bounds for convexly constrained problems involving non-Lipschitz group sparsity terms.
In summary, even in the Euclidean setting, a tractable complexity analysis can be carried out only when the functions $\{h_i\}_{i = 1}^{m}$ possess certain special structural properties, such as the fully separable structure explored in \cite{Bian2013worst,Bian2015complexity} and the partially separable structure considered in \cite{Chen2021high,Chen2019complexity,Liu2016smoothing}.
These limitations underscore the intrinsic obstacle to establishing the iteration complexity of algorithms aimed at solving problem~\eqref{opt:rnlip}.

\subsection{Contribution}

The contributions of this paper can be summarized in the following three aspects.

(i)
Based on the proposed smoothing paradigm, we design an adaptive smoothing Riemannian gradient algorithm (ASRGA) for problem~\eqref{opt:rnlip}.
This approach couples a carefully scheduled decay of the smoothing parameter with Riemannian gradient steps on the smoothed objective function.
To avoid the practical burden of estimating problem parameters or performing line-search procedures, we endow ASRGA with an adaptive stepsize policy, inspired by the classical AdaGrad algorithm \cite{Duchi2011adaptive,Gratton2024complexity}.
This new update rule can be viewed as a smoothing-aware AdaGrad scheme rather than a direct transplantation of AdaGrad.
In ASRGA, both the gradient accumulation and the effective stepsize are scaled in accordance with the current smoothing level to compensate for the blow-up of Lipschitz constants.

(ii)
We establish the global convergence of ASRGA to a stationary point of problem~\eqref{opt:rnlip}.
Moreover, an $\epsilon$-approximate stationary point can be found by ASRGA within $O (\epsilon^{p -4})$ iterations.
ASRGA achieves these results in an entirely parameter-free manner, thereby reconciling practical implementability with rigorous worst-case guarantees.
Table~\ref{tb:complexity} summarizes the complexity results of several representative algorithms for solving nonsmooth optimization problems on manifolds.
When $p = 1$, the iteration complexity of ASRGA specializes to $O (\epsilon^{-3})$, which is consistent with existing results for Riemannian optimization with Lipschitz continuous functions.
Moreover, our complexity analysis reveals that smaller values of $p$ render problem~\eqref{opt:rnlip} increasingly challenging to solve.
These findings bridge a notable gap between Lipschitz and non-Lipschitz optimization on manifolds.
To the best of our knowledge, the present work delivers the first complexity bound for problem~\eqref{opt:rnlip}.

(iii)
In numerical experiments, ASRGA consistently enjoys a pronounced computational advantage over existing methods, which lends empirical support to the effectiveness and robustness of the proposed smoothing methodology.
This superiority persists even in the special case of $p = 1$, where our algorithm continues to outperform DSGM \cite{Beck2023dynamic} and ManPG \cite{Chen2020proximal}.

\begin{table}[ht]
	\setlength{\extrarowheight}{1.2pt}
	\centering
	\begin{minipage}{\textwidth}
		\caption{Complexity results of some representative algorithms for nonsmooth optimization problems on manifolds.}
		\label{tb:complexity}
		\begin{tabular*}{\textwidth}{@{\hspace{5pt}\extracolsep{\fill}}cccc@{\hspace{5pt}}}
			\toprule
			Algorithm & Mechanism & Objective & Complexity \\
			\midrule
			RSM \cite{Li2021weakly} & subgradient & Lipschitz continuous & $O (\epsilon^{-4})$ \\
			\hdashline[0.5pt/2pt]
			RADMM \cite{Li2025riemannian} & ADMM & Lipschitz continuous & $O (\epsilon^{-4})$ \\
			\hdashline[0.5pt/2pt]
			iRPDC \cite{Jiang2025inexact} & proximal gradient & Lipschitz continuous & $O (\epsilon^{-3})$ \\
			\hdashline[0.5pt/2pt]
			ManIAL \cite{Deng2025oracle} & augmented Lagrangian & Lipschitz continuous & $O (\epsilon^{-3})$ \\
			\hdashline[0.5pt/2pt]
			DSGM \cite{Beck2023dynamic} & smoothing & Lipschitz continuous & $O (\epsilon^{-3})$ \\
			\hdashline[0.5pt/2pt]
			RSSD \cite{Zhang2024riemannian} & smoothing & non-Lipschitz & unknown \\
			\hdashline[0.5pt/2pt]
			ASRGA [this work] & smoothing & non-Lipschitz & $O (\epsilon^{p - 4})$ \\
			\bottomrule
		\end{tabular*}
	\end{minipage}
\end{table}

\subsection{Organization}

The rest of this paper proceeds as follows.
Section~\ref{sec:preliminary} draws into some preliminaries of Riemannian optimization.
Section~\ref{sec:smoothing} is dedicated to proposing an effective smoothing scheme that approximates the objective function of problem~\eqref{opt:rnlip}, which is followed by the development of an adaptive smoothing Riemannian gradient algorithm in Section~\ref{sec:algorithm}.
Then we establish the global convergence and the iteration complexity of the proposed algorithm in Section~\ref{sec:convergence}.
Numerical results are presented in Section~\ref{sec:numerical} to illustrate the superior computational efficiency of our algorithm relative to existing methods.
Finally, this paper concludes with key insights and directions for future research in Section~\ref{sec:conclusion}.

\section{Preliminaries} 

\label{sec:preliminary}

In this section, we introduce the notations used throughout this paper and provide a concise overview of several fundamental concepts in Riemannian optimization.

\subsection{Basic Notations}

We use $\bR$ and $\bN$ to denote the sets of real and natural numbers, respectively.
The set of nonnegative real numbers is represented by $\bR_{+}$.
The Euclidean inner product of two matrices $C_1, C_2$ with the same size is defined as $\jkh{C_1, C_2} = \tr (C_1\zz C_2)$, where $\tr (B)$ stands for the trace of a square matrix $B$.
The Frobenius norm and the $\ell_q$ norm with $q \geq 1$ of a given matrix $C$ are denoted by $\norm{C}\ff$ and $\norm{C}_q$, respectively.

\subsection{Riemannian Gradients}

For each point $X \in \cM$, the tangent space to $\cM$ at $X$ is referred to as $\cT_{X} \cM$.
In this paper, we consider the Riemannian metric $\jkh{\cdot, \cdot}_X$ on $\cT_{X} \cM$ that is induced from the Euclidean inner product $\jkh{\cdot, \cdot}$, i.e., $\jkh{D_1, D_2}_X = \jkh{D_1, D_2} = \tr(D_1\zz D_2)$ for any $D_1, D_2 \in \cT_{X} \cM$.
The tangent bundle of $\cM$ is denoted by $\cT \cM = \{(X, D) \mid X \in \cM, D \in \cT_X \cM\}$, that is, the disjoint union of the tangent spaces of $\cM$.
Additionally, we use the notation $\proj_{\cT_X \cM} (\cdot)$ to represent the orthogonal projection operator onto $\cT_X \cM$.
In the context of the Stiefel manifold $\Ons$, the tangent space at $X$ can be described as $\cT_{X} \Ons = \{D \in \Rns \mid X\zz D + D\zz X = 0\}$, and its orthogonal projection operator is given by
\begin{equation*}
	\proj_{\cT_X \Ons} (D) = D - X (X\zz D + D\zz X) / 2,
\end{equation*}
for any $D \in \Rns$.

For a smooth function $f$, the Riemannian gradient at $X \in \cM$, denoted by $\grad\, f (X)$, is defined as the unique element of $\cT_X \cM$ satisfying
\begin{equation*}
	\jkh{\grad\, f (X), D} = \sfD f (X) [D], \quad \forall\, D \in \cT_X \cM,
\end{equation*}
where $\sfD f (X) [D]$ is the directional derivative of $f$ along the direction $D$ at the point $X$.
Since $f$ is defined on an embedded submanifold in the Euclidean space, its Riemannian gradient can be computed by projecting the Euclidean gradient $\nabla f (X)$ onto the tangent space as follows,
\begin{equation*}
	\grad\, f (X) = \proj_{\cT_X \cM} \dkh{\nabla f (X)}.
\end{equation*}

\subsection{Retractions}

In contrast to the Euclidean setting, the point $X + D$ does not lie in the manifold in general for $X \in \cM$ and $D \in \cT_{X} \cM$, due to the absence of a linear structure in $\cM$.
The interplay between $\cM$ and $\cT_{X} \cM$ is typically carried out via the exponential mappings, which are usually computationally intensive to evaluate in practice.
As an alternative, the concept of a retraction, which is a first-order approximation of the exponential mapping and can be more amenable to computation, is given as follows.

\begin{definition}[\cite{Absil2008optimization}] \label{def:retr}
	A retraction on a manifold $\cM$ is a smooth mapping $\retr: \cT \cM \to \cM$, and the restriction of $\retr$ to $\cT_{X} \cM$, denoted by $\retr_{X}$, satisfies the following two properties for any $X \in \cM$.
	\begin{enumerate}[(i)]
		
		\item It holds that $\retr_{X} (0_X) = X$, where $0_X$ is the zero vector in $\cT_{X} \cM$.
		
		\item The differential of $\retr_{X}$ at $0_X$ is an identity map on $\cT_{X} \cM$.
		
	\end{enumerate}
\end{definition}

By leveraging the retraction $\retr_{X} (D)$, we can obtain a point by moving away from $X \in \cM$ along the direction $D \in \cT_{X} \cM$, while remaining on the manifold.
To this extent, it defines an update rule to preserve the feasibility.
For instance, there are various practical realizations of retractions on the Stiefel manifold, including QR factorization, polar decomposition and Cayley transformation.
A useful property of the retraction is highlighted in the following lemma.

\begin{lemma}[{\cite[Lemma~2.7]{Boumal2018global}}]
	\label{le:retr}
	Let $\cM$ be a compact embedded submanifold of $\Rns$ and $\psi: \Rns \to \bR$ be a continuously differentiable function.
	Suppose that $\nabla \psi$ is Lipschitz continuous over $\cM$ with the corresponding Lipschitz constants $L_{\psi}$ and $\norm{\nabla \psi (X)}\ff \leq M_{\psi}$ for all $X \in \cM$.
	There exist two constants $R_1 > 0$ and $R_2 > 0$ independent of $\psi$ such that
	\begin{equation*}
		\psi (\retr_{X} (D))
		\leq \psi (X)
		+ \jkh{\grad\, \psi (X), D}
		+ \dfrac{R_1^2 L_{\psi} + 2 R_2 M_{\psi}}{2} \norm{D}\fs,
	\end{equation*}
	for any $X \in \cM$ and $D \in \cT_{X} \cM$.
\end{lemma}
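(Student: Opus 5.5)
The plan is to compare $\psi$ along the curve $t \mapsto \retr_X(tD)$ with $\psi$ along the straight line in the ambient space $\Rns$, and to split the error into two pieces. One piece comes from the curvature of $\psi$ and is controlled by $L_\psi$. The other comes from the deviation of the retraction from the line $X + D$ and is controlled by $M_\psi$. The constants $R_1, R_2$ are to depend only on $\cM$ and $\retr$, which rests on compactness of $\cM$ and smoothness of $\retr$.

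\emph{Step 1: geometric bounds on the retraction.} I will show there are constants $R_1, R_2 > 0$ such that, for all $X \in \cM$ and $D \in \cT_X \cM$,
\begin{equation*}
\norm{\retr_X(D) - X}\ff \leq R_1 \norm{D}\ff, \qquad \norm{\retr_X(D) - X - D}\ff \leq R_2 \norm{D}\fs .
\end{equation*}
For $\norm{D}\ff \leq 1$, the set $\{(X,D) \in \cT\cM : \norm{D}\ff \leq 1\}$ is compact and $\retr$ is smooth. Since $\retr_X(0_X) = X$ and $\sfD\retr_X(0_X) = \mathrm{id}$ by Definition~\ref{def:retr}, Taylor's theorem with a uniform bound on the second derivative over this compact set gives both inequalities. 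For $\norm{D}\ff > 1$, I use that $\cM$ is bounded, say with diameter $d$. Then $\norm{\retr_X(D) - X}\ff \leq d \leq d\norm{D}\ff$. Also $\norm{\retr_X(D) - X - D}\ff \leq d + \norm{D}\ff \leq (d+1)\norm{D}\fs$. Taking the larger constants in each case yields global $R_1, R_2$, which are independent of $\psi$.

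\emph{Step 2: Euclidean descent inequality.} Set $Y = \retr_X(D) \in \cM$. The Lipschitz continuity of $\nabla\psi$ holds only over $\cM$, not along the segment from $X$ to $Y$, which may leave $\cM$. This is the point I expect to need the most care. One option is to read the assumption as giving the standard quadratic upper bound between points of $\cM$, as in \cite{Boumal2018global}. Alternatively, I would extend the Lipschitz bound to a neighbourhood of $\cM$, or simply invoke the statement as cited, since the lemma is quoted from the literature. Granting the inequality
\begin{equation*}
\psi(Y) \leq \psi(X) + \jkh{\nabla\psi(X), Y - X} + \tfrac{L_\psi}{2}\norm{Y - X}\fs,
\end{equation*}
the last term is at most $\tfrac{R_1^2 L_\psi}{2}\norm{D}\fs$ by Step 1.

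\emph{Step 3: split the linear term.} I write
\begin{equation*}
\jkh{\nabla\psi(X), Y - X} = \jkh{\nabla\psi(X), D} + \jkh{\nabla\psi(X), Y - X - D}.
\end{equation*}
Because $D \in \cT_X\cM$, the first term equals $\jkh{\proj_{\cT_X\cM}\nabla\psi(X), D} = \jkh{\grad\,\psi(X), D}$. By Cauchy--Schwarz and Step 1, the second term is at most $M_\psi R_2 \norm{D}\fs = \tfrac{2R_2 M_\psi}{2}\norm{D}\fs$. Adding the three estimates gives exactly the claimed inequality.
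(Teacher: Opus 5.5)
Your Steps~1 and~3 are the argument of the cited source: the paper gives no proof and defers to \cite[Lemma~2.7]{Boumal2018global}. But Step~2, which you leave as ``granted'', is a genuine gap for the statement as written. The inequality $\psi(Y)\le\psi(X)+\jkh{\nabla\psi(X),Y-X}+\frac{L_\psi}{2}\norm{Y-X}\fs$ comes from integrating along the chord $[X,Y]$, and that chord leaves $\cM$. The source lemma assumes $\nabla\psi$ Lipschitz on the convex hull of $\cM$ precisely for this step. Here $\nabla\psi$ is Lipschitz only over $\cM$ and merely continuous elsewhere, and none of your three escape routes works.
\begin{enumerate}[(i)]
\item Reading the hypothesis as the chord inequality between points of $\cM$ is an extra assumption, and it is false in general. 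If $\cM$ is two disjoint spheres and $\psi$ is smooth, equal to $0$ near one and $1$ near the other, then $L_\psi=M_\psi=0$, yet the inequality fails by $1$. Restricting to one component does not help: points of $\cM$ close in $\Rns$ can be far apart along $\cM$, where the hypothesis gives no control of $\psi(Y)-\psi(X)-\jkh{\nabla\psi(X),Y-X}$ at scale $L_\psi\norm{Y-X}\fs$.
\item Extending the Lipschitz bound to a neighbourhood is impossible for a given $\psi$. For example, $\psi(X)=\abs{\norm{X}\fs-1}^{3/2}$ has $\nabla\psi=0$ on the unit sphere, but $\nabla\psi$ is not Lipschitz on any neighbourhood of it.
\item Invoking the citation is circular, and it would import the stronger convex-hull hypothesis anyway.
\end{enumerate}

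The repair is to integrate along the retraction curve $c(t)=\retr_X(tD)$, which stays in $\cM$, instead of along the chord.

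\emph{Case $\norm{D}\ff\le1$.} Compactness of $\{(X,V)\in\cT\cM:\norm{V}\ff\le1\}$ and smoothness of $\retr$ give $R_1$ with $\norm{\sfD\retr_X(V)[W]}\ff\le R_1\norm{W}\ff$ on that set. Hence $\norm{c'(t)}\ff\le R_1\norm{D}\ff$ and $\norm{c(t)-X}\ff\le R_1t\norm{D}\ff$, so
\[
\psi(c(1))-\psi(X)-\jkh{\nabla\psi(X),c(1)-X}=\int_0^1\jkh{\nabla\psi(c(t))-\nabla\psi(X),c'(t)}\,dt\le\frac{R_1^2L_\psi}{2}\norm{D}\fs .
\]
This uses the Lipschitz bound only at points of $\cM$, and your Step~3 then finishes this case.

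\emph{Case $\norm{D}\ff>1$.} Avoid the chord entirely. $X$ and $\retr_X(D)$ lie in the same connected component of $\cM$, since $c$ joins them. The compact manifold $\cM$ has finitely many components, each of intrinsic diameter at most some $\Delta$. Integrating $\frac{d}{dt}\psi(\gamma(t))=\jkh{\nabla\psi(\gamma(t)),\gamma'(t)}$ along a shortest path $\gamma$ in $\cM$ gives $\psi(\retr_X(D))-\psi(X)\le M_\psi\Delta$. Hence $\psi(\retr_X(D))-\psi(X)-\jkh{\grad\,\psi(X),D}\le M_\psi(\Delta+1)\norm{D}\fs$, which is absorbed by taking $R_2\ge\Delta+1$.

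Both constants depend only on $\cM$ and $\retr$, so this proves the lemma under the hypothesis actually stated.
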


\section{Smoothing Approximation}

\label{sec:smoothing}

To address the challenges posed by the non-Lipschitz singularities of problem~\eqref{opt:rnlip}, we propose to leverage the smoothing approximation technique \cite{Chen2012smoothing}.
The purpose of this section is to develop a suitable smoothing surrogate for the objective function of problem~\eqref{opt:rnlip} and to analyze its favorable properties.

\subsection{Smoothing Function}

The nonsmooth structure of problem~\eqref{opt:rnlip} originates from the function $\varphi$.
Accordingly, we begin by introducing a smoothing function of $\varphi$, which in turn induces a corresponding smoothing surrogate for the objective function of problem~\eqref{opt:rnlip}.

\begin{definition}
	\label{def:smoothing}
	A function $\tilde{\varphi}_{\mu}: \bR \to \bR_{+}$ with a smoothing parameter $\mu > 0$ is called a smoothing function of $\varphi: \bR \to \bR_{+}$ if it satisfies the following conditions.
	\begin{enumerate}[(i)]
		
		\item For any $\mu > 0$, $\tilde{\varphi}_{\mu}$ is continuously differentiable over $\bR$.
		
		\item There exists a constant $\sigma > 0$ such that
		\begin{equation}
			\label{eq:varphi-lb}
			\tilde{\varphi}_{\mu} (t) \geq \sigma \mu,
		\end{equation}
		for all $t \in \bR$ and $\mu > 0$.
		
		\item For any $t \in \bR$, it holds that
		\begin{equation}
			\label{eq:varphi-limit}
			\lim_{s \to t, \, \mu \downarrow 0} \tilde{\varphi}_{\mu} (s) = \varphi (t).
		\end{equation}
		
		\item There exists a constant $\kappa > 0$ such that
		\begin{equation}
			\label{eq:varphi-lip}
			\abs{\tilde{\varphi}_{\mu_1} (t) - \tilde{\varphi}_{\mu_2} (t)} \leq \kappa \abs{\mu_1 - \mu_2},
		\end{equation}
		for all $t \in \bR$ and $\mu_1, \mu_2 > 0$.
		
		\item There exists a constant $M_{\varphi} > 0$ such that
		\begin{equation}
			\label{eq:varphi-ub}
			\abs{ \tilde{\varphi}_{\mu}\uprime (t) } \leq M_{\varphi},
		\end{equation}
		for all $t \in \bR$ and $\mu > 0$.
		
		\item There exists a constant $L_{\varphi} > 0$ such that, for any $\mu > 0$, $\tilde{\varphi}_{\mu}\uprime$ is Lipschitz continuous over $\bR$ with the corresponding Lipschitz constant $L_{\varphi} \mu^{-1}$.
		
	\end{enumerate}
\end{definition}

In the special case of $p = 1$, the property \eqref{eq:varphi-lb} can be dispensed with, while the subsequent theoretical analysis remains intact by adopting the convention $0^0 = 1$.
As an illustrative example, the smoothing function of $\varphi (t) = \abs{t}$ with $\mu > 0$ can be designed as
\begin{equation}
	\label{eq:tilde-abs}
	\tilde{\varphi}_{\mu} (t) = \left\{
	\begin{aligned}
		& \abs{t}, && \mbox{if~} t > \mu \mbox{~or~} t < - \mu, \\
		& \dfrac{t^2}{2 \mu} + \dfrac{\mu}{2}, && \mbox{if~} - \mu \leq t \leq \mu,
	\end{aligned}
	\right.
\end{equation}
which fulfills all the requirements in Definition \ref{def:smoothing}.
We refer interested readers to \cite{Chen2012smoothing} for more examples of smoothing functions applied to Lipschitz continuous functions.

Building on the construction of $\tilde{\varphi}_{\mu}$, we approximate the objective function of problem~\eqref{opt:rnlip} by the following smoothing function,
\begin{equation}
	\label{eq:tilde-f}
	\tilde{f}_{\mu} (X) = g (X) + \sum_{i = 1}^{m} \fkh{ \tilde{\varphi}_{\mu} (h_i (X)) }^p,
\end{equation}
with $\mu > 0$.
As can be directly inferred from the property \eqref{eq:varphi-lb}, the function $\tilde{f}_{\mu}$ is locally Lipschitz continuous.
Moreover, $\tilde{f}_{\mu}$ is continuously differentiable and its Euclidean gradient is given by
\begin{equation*}
	\nabla \tilde{f}_{\mu} (X) = \nabla g (X) + p \sum_{i = 1}^{m} \fkh{ \tilde{\varphi}_{\mu} (h_i (X)) }^{p - 1} \tilde{\varphi}_{\mu}\uprime (h_i (X)) \nabla h_i (X).
\end{equation*}
The smoothing function $\tilde{f}_{\mu}$ underpins the design of algorithms presented later in this paper.

\begin{remark}
	The property \eqref{eq:varphi-lb} in Definition~\ref{def:smoothing} is indispensable for coping with the non-Lipschitz nature of problem~\eqref{opt:rnlip} when $p \in (0, 1)$.
	In the existing literature \cite{Beck2023dynamic,Peng2023riemannian}, the Moreau envelope is commonly invoked as a smoothing function.
	However, it does not satisfy the condition~ \eqref{eq:varphi-lb}.
	For instance, the Moreau envelope of $\varphi (t) = \abs{t}$ with a smoothing parameter $\mu > 0$ is expressed as
	\begin{equation*}
		\bar{\varphi}_{\mu} (t)
		= \min_{s \in \bR} \hkh{ \abs{s} + \dfrac{1}{2 \mu} (s - t)^2 }
		= \tilde{\varphi}_{\mu} (t) - \dfrac{\mu}{2},
	\end{equation*}
	which vanishes at $t = 0$.
	Here, $\tilde{\varphi}_{\mu}$ is the smoothing function defined in \eqref{eq:tilde-abs}.
	By using this Moreau envelope, we arrive at an approximation $\bar{\omega}_{\mu} (t) = [\bar{\varphi}_{\mu} (t)]^p$ for the non-Lipschitz function $\omega (t) = \abs{t}^p$.
	Then $\bar{\omega}_{\mu}$ fails to be differentiable whenever $p \in (0, 1/2]$.
	Even for $p \in (1/2, 1)$, while differentiability is recovered, the Lipschitz continuity of its gradient is still lacking.
	It is only in the case $p = 1$ that $\bar{\omega}_{\mu}$ becomes differentiable with a Lipschitz continuous gradient.
\end{remark}

\subsection{Useful Properties}

This subsection elucidates several desirable properties of the smoothing function $\tilde{f}_{\mu}$, which will be instrumental in the subsequent analysis.

Owing to the compactness of $\cM$, there exist two constants $M_g > 0$ and $M_h > 0$ such that $\norm{\nabla g (X)}\ff \leq M_g$ and $\norm{\nabla h_i (X)}\ff \leq M_h$ for all $X \in \cM$ and $i \in \{1, 2, \dotsc, m\}$.
The following lemma gives the Lipschitz constant of $\nabla \tilde{f}_{\mu}$.

\begin{lemma}
	\label{le:sgrad-lip}
	Let $\mu \in (0, 1]$.
	The gradient $\nabla \tilde{f}_{\mu}$ is Lipschitz continuous over $\cM$ with the Lipschitz constant $L_{\mu} := L_g + L_1 L_h \mu^{p - 1} + L_2 \mu^{p - 2}$, where $L_1 = p m \sigma^{p - 1} M_{\varphi} > 0$ and $L_2 = p m \sigma^{p - 2} M_h^2 ( \sigma L_{\varphi} + (1 - p)  M_{\varphi}^2 ) > 0$ are two constants.
\end{lemma}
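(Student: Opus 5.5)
We write $\nabla \tilde f_\mu = \nabla g + p\sum_{i=1}^m \Phi_i$ with $\Phi_i(X) := \fkh{\tilde\varphi_\mu(h_i(X))}^{p-1}\tilde\varphi_\mu'(h_i(X))\nabla h_i(X)$. The plan is to bound $\norm{\Phi_i(X)-\Phi_i(Y)}\ff$ for $X,Y\in\cM$ by splitting the product into three telescoping pieces. With the shorthand $a=\tilde\varphi_\mu(h_i(X))$, $b=\tilde\varphi_\mu(h_i(Y))$, $a'=\tilde\varphi_\mu'(h_i(X))$, $b'=\tilde\varphi_\mu'(h_i(Y))$, we have
\begin{equation*}
\Phi_i(X)-\Phi_i(Y) = a^{p-1}a'\dkh{\nabla h_i(X)-\nabla h_i(Y)} + a^{p-1}(a'-b')\nabla h_i(Y) + (a^{p-1}-b^{p-1})\, b'\,\nabla h_i(Y).
\end{equation*}
The term $\nabla g$ contributes $L_g$ directly by Assumption~\ref{asp:function}.

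We bound each piece using Definition~\ref{def:smoothing}. First we need a Lipschitz bound for $h_i$ on $\cM$. Since $\norm{\nabla h_i}\ff\le M_h$ on $\cM$, we would like $\abs{h_i(X)-h_i(Y)}\le M_h\norm{X-Y}\ff$. This is the one delicate point, because the segment between $X$ and $Y$ leaves $\cM$. I would handle it by taking $M_h$ to bound $\norm{\nabla h_i}\ff$ on the convex hull of the compact set $\cM$, which is legitimate since $\nabla h_i$ is continuous. Equivalently, $M_h$ may be read as that slightly enlarged bound.

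With that in hand, the three pieces are bounded as follows.
\begin{enumerate}[(i)]
\item By \eqref{eq:varphi-lb} and $p-1\le 0$, $a^{p-1}\le(\sigma\mu)^{p-1}$. Combined with \eqref{eq:varphi-ub}, the first piece is at most $\sigma^{p-1}M_\varphi\mu^{p-1}L_h\norm{X-Y}\ff$.
\item By property (vi), the second piece is at most $(\sigma\mu)^{p-1}\cdot L_\varphi\mu^{-1}M_h\cdot M_h\norm{X-Y}\ff = \sigma^{p-1}L_\varphi M_h^2\mu^{p-2}\norm{X-Y}\ff$.
\item For the third piece, apply the mean value theorem to $t\mapsto t^{p-1}$ on $[\sigma\mu,\infty)$. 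This gives $\abs{a^{p-1}-b^{p-1}}\le(1-p)(\sigma\mu)^{p-2}\abs{a-b}$. Next, $\abs{a-b}\le M_\varphi\abs{h_i(X)-h_i(Y)}\le M_\varphi M_h\norm{X-Y}\ff$. Multiplying by $\abs{b'}\le M_\varphi$ and $\norm{\nabla h_i(Y)}\ff\le M_h$ yields the bound $(1-p)\sigma^{p-2}M_\varphi^2M_h^2\mu^{p-2}\norm{X-Y}\ff$.
\end{enumerate}

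Summing over $i$ and multiplying by $p$ gives $pm\sigma^{p-1}M_\varphi L_h\mu^{p-1} = L_1L_h\mu^{p-1}$ from piece (i). Pieces (ii) and (iii) together give $pm\sigma^{p-2}M_h^2(\sigma L_\varphi+(1-p)M_\varphi^2)\mu^{p-2} = L_2\mu^{p-2}$. This matches $L_\mu$ exactly. The hypothesis $\mu\le1$ is not needed for the inequality itself. It only matters for later comparisons between the powers of $\mu$, so I will simply note it.

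The main obstacle is the Lipschitz continuity of $h_i$ along chords off the manifold. Everything else is routine bookkeeping, and the mean value estimate for $t^{p-1}$ relies crucially on the lower bound \eqref{eq:varphi-lb}.
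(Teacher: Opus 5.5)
Your proposal is correct and follows the paper's proof essentially step for step. The paper uses the same three-piece decomposition: it first writes $U+V$, then bounds $V\le M_h W$ and splits $W$ by the triangle inequality. It also obtains exactly your bounds and constants. Your remark about $\abs{h_i(X)-h_i(Y)}\le M_h\norm{X-Y}\ff$ is a genuine refinement, since the paper uses that inequality without justification. Taking $M_h$ to bound $\norm{\nabla h_i}\ff$ on the compact convex hull of $\cM$ is a valid way to close that step.
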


\begin{proof}
	Let $H_i (X) = \fkh{ \tilde{\varphi}_{\mu} (h_i (X)) }^{p - 1} \tilde{\varphi}_{\mu}\uprime (h_i (X)) \nabla h_i (X)$ for $X \in \cM$ and $i \in \{1, 2, \dotsc, m\}$.
	Then straightforward calculations yield that
	\begin{equation}
		\label{eq:h-xy}
		\norm{H_i (X) - H_i (Y)}\ff \leq U (X, Y) + V (X, Y),
	\end{equation}
	where
	\begin{equation*}
		U (X, Y) = \norm{\fkh{ \tilde{\varphi}_{\mu} (h_i (X)) }^{p - 1} \tilde{\varphi}_{\mu}\uprime (h_i (X)) \dkh{ \nabla h_i (X) - \nabla h_i (Y) }}\ff,
	\end{equation*}
	and
	\begin{equation*}
		V (X, Y) = \norm{ \dkh{ \fkh{ \tilde{\varphi}_{\mu} (h_i (X)) }^{p - 1} \tilde{\varphi}_{\mu}\uprime (h_i (X)) - \fkh{ \tilde{\varphi}_{\mu} (h_i (Y)) }^{p - 1} \tilde{\varphi}_{\mu}\uprime (h_i (Y)) } \nabla h_i (Y) }\ff.
	\end{equation*}
	By virtue of the properties \eqref{eq:varphi-lb} and \eqref{eq:varphi-ub}, we obtain that
	\begin{equation}
		\label{eq:u-xy}
		\begin{aligned}
			U (X, Y)
			\leq {} & \dfrac{\abs{\tilde{\varphi}_{\mu}\uprime (h_i (X))}}{\abs{ \tilde{\varphi}_{\mu} (h_i (X)) }^{1 - p}} \norm{\nabla h_i (X) - \nabla h_i (Y)}\ff \\
			\leq {} & \sigma^{p - 1} \mu^{p - 1} M_{\varphi} L_h \norm{X - Y}\ff
			= \dfrac{L_1 L_h \mu^{p - 1}}{p m} \norm{X - Y}\ff.
		\end{aligned}
	\end{equation}
	Moreover, it holds that
	\begin{equation}
		\label{eq:v-xy-w}
		V (X, Y) \leq M_h W (X, Y),
	\end{equation}
	where $W (X, Y) = | \fkh{ \tilde{\varphi}_{\mu} (h_i (X)) }^{p - 1} \tilde{\varphi}_{\mu}\uprime (h_i (X)) - \fkh{ \tilde{\varphi}_{\mu} (h_i (Y)) }^{p - 1} \tilde{\varphi}_{\mu}\uprime (h_i (Y)) |$.
	According to the triangle inequality, it follows that
	\begin{equation}
		\label{eq:w-xy}
		\begin{aligned}
			W (X, Y)
			= {} & \abs{ \fkh{ \tilde{\varphi}_{\mu} (h_i (X)) }^{p - 1} \tilde{\varphi}_{\mu}\uprime (h_i (X)) - \fkh{ \tilde{\varphi}_{\mu} (h_i (Y)) }^{p - 1} \tilde{\varphi}_{\mu}\uprime (h_i (Y)) } \\
			\leq {} & \abs{ \fkh{ \tilde{\varphi}_{\mu} (h_i (X)) }^{p - 1} } \abs{ \tilde{\varphi}_{\mu}\uprime (h_i (X)) - \tilde{\varphi}_{\mu}\uprime (h_i (Y)) } \\
			& + \abs{ \fkh{ \tilde{\varphi}_{\mu} (h_i (X)) }^{p - 1} - \fkh{ \tilde{\varphi}_{\mu} (h_i (Y)) }^{p - 1} } \abs{ \tilde{\varphi}_{\mu}\uprime (h_i (Y)) } \\
			\leq {} & \sigma^{p - 1} \mu^{p - 1} \abs{ \tilde{\varphi}_{\mu}\uprime (h_i (X)) - \tilde{\varphi}_{\mu}\uprime (h_i (Y)) } \\
			& + M_{\varphi} \abs{ \fkh{ \tilde{\varphi}_{\mu} (h_i (X)) }^{p - 1} - \fkh{ \tilde{\varphi}_{\mu} (h_i (Y)) }^{p - 1} }.
		\end{aligned}
	\end{equation}
	Since $\tilde{\varphi}_{\mu}\uprime$ is Lipschitz continuous with the corresponding Lipschitz constant $L_{\varphi} \mu^{-1}$, we have
	\begin{equation}
		\label{eq:w-xy-1}
		\abs{ \tilde{\varphi}_{\mu}\uprime (h_i (X)) - \tilde{\varphi}_{\mu}\uprime (h_i (Y)) }
		\leq \dfrac{L_{\varphi}}{\mu} \abs{h_i (X) - h_i (Y)}
		\leq \dfrac{L_{\varphi} M_h}{\mu} \norm{X - Y}\ff.
	\end{equation}
	And a straightforward verification reveals that
	\begin{equation}
		\label{eq:w-xy-2}
		\begin{aligned}
			\abs{ \fkh{ \tilde{\varphi}_{\mu} (h_i (X)) }^{p - 1} - \fkh{ \tilde{\varphi}_{\mu} (h_i (Y)) }^{p - 1} }
			\leq {} & (1 - p) \sigma^{p - 2} \mu^{p - 2} \abs{\tilde{\varphi}_{\mu} (h_i (X)) - \tilde{\varphi}_{\mu} (h_i (Y))} \\
			\leq {} & (1 - p) \sigma^{p - 2} \mu^{p - 2} M_{\varphi} M_h \norm{X - Y}\ff.
		\end{aligned}
	\end{equation}
	By combining three relationships \eqref{eq:w-xy}, \eqref{eq:w-xy-1}, and \eqref{eq:w-xy-2}, we arrive at
	\begin{equation*}
		W (X, Y)
		\leq \sigma^{p - 2} M_h \dkh{ \sigma L_{\varphi} + (1 - p)  M_{\varphi}^2  } \mu^{p - 2} \norm{X - Y}\ff
		= \dfrac{L_2 \mu^{p - 2}}{p m M_h} \norm{X - Y}\ff,
	\end{equation*}
	which together with \eqref{eq:v-xy-w} implies that
	\begin{equation}
		\label{eq:v-xy}
		V (X, Y) \leq \dfrac{L_2 \mu^{p - 2}}{p m} \norm{X - Y}\ff.
	\end{equation}
	From \eqref{eq:h-xy}, \eqref{eq:u-xy}, and \eqref{eq:v-xy}, it can be readily verified that
	\begin{equation*}
		\norm{H_i (X) - H_i (Y)}\ff \leq \dfrac{1}{p m} \dkh{ L_1 L_h \mu^{p - 1} + L_2 \mu^{p - 2} } \norm{X - Y}\ff.
	\end{equation*}
	Hence, we obtain that
	\begin{equation*}
		\begin{aligned}
			\norm{ \nabla \tilde{f}_{\mu} (X) - \nabla \tilde{f}_{\mu} (Y) }\ff
			\leq {} & \norm{\nabla g (X) - \nabla g (Y)}\ff
			+ p \sum_{i = 1}^{m} \norm{H_i (X) - H_i (Y)}\ff \\
			\leq {} & \dkh{ L_g + L_1 L_h \mu^{p - 1} + L_2 \mu^{p - 2} } \norm{X - Y}\ff,
		\end{aligned}
	\end{equation*}
	for all $X \in \cM$ and $Y \in \cM$.
	The proof is completed.
\end{proof}

In the following lemma, we proceed to show that the Euclidean gradient of the smoothing function $\tilde{f}_{\mu}$ is uniformly bounded on the manifold $\cM$.

\begin{lemma}
	There exists a positive constant $M_{\mu} := M_g + L_1 M_h \mu^{p - 1}$ such that $\|\nabla \tilde{f}_{\mu} (X)\|\ff \leq M_{\mu}$ for any $X \in \cM$ and $\mu \in (0, 1]$.
\end{lemma}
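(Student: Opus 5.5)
We will bound the gradient directly from the explicit formula for $\nabla \tilde{f}_{\mu}$ given after \eqref{eq:tilde-f}. By the triangle inequality, for any $X \in \cM$,
\begin{equation*}
	\norm{\nabla \tilde{f}_{\mu} (X)}\ff
	\leq \norm{\nabla g (X)}\ff + p \sum_{i = 1}^{m} \fkh{ \tilde{\varphi}_{\mu} (h_i (X)) }^{p - 1} \abs{\tilde{\varphi}_{\mu}\uprime (h_i (X))} \norm{\nabla h_i (X)}\ff .
\end{equation*}
The first term is at most $M_g$, by the bound $\norm{\nabla g (X)}\ff \leq M_g$ on $\cM$ that follows from compactness.

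For each summand we bound the three factors separately. Since $p - 1 \leq 0$, the map $t \mapsto t^{p - 1}$ is nonincreasing on $(0, \infty)$. Together with the lower bound \eqref{eq:varphi-lb}, this gives $\fkh{\tilde{\varphi}_{\mu} (h_i (X))}^{p - 1} \leq \sigma^{p - 1} \mu^{p - 1}$; this is the same estimate already used in \eqref{eq:u-xy}. Property \eqref{eq:varphi-ub} gives $\abs{\tilde{\varphi}_{\mu}\uprime (h_i (X))} \leq M_{\varphi}$, and compactness gives $\norm{\nabla h_i (X)}\ff \leq M_h$.

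Summing over the $m$ indices, the second term is at most $p m \sigma^{p - 1} M_{\varphi} M_h \mu^{p - 1} = L_1 M_h \mu^{p - 1}$, using the definition of $L_1$ in Lemma~\ref{le:sgrad-lip}. Hence $\norm{\nabla \tilde{f}_{\mu} (X)}\ff \leq M_g + L_1 M_h \mu^{p - 1} = M_{\mu}$.

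There is no real obstacle here. The only point needing care is the direction of the monotonicity of $t^{p - 1}$ when $p < 1$, which is exactly where \eqref{eq:varphi-lb} is needed. When $p = 1$ we use the convention $0^0 = 1$, so that factor equals $1$ and \eqref{eq:varphi-lb} is not needed. The restriction $\mu \in (0, 1]$ is not used in the argument; it is kept only for consistency with Lemma~\ref{le:sgrad-lip}.
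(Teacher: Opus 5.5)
Your proposal is correct and follows the paper's proof essentially step for step. You apply the triangle inequality to the explicit gradient formula and bound each term $\fkh{\tilde{\varphi}_{\mu}(h_i(X))}^{p-1}\tilde{\varphi}_{\mu}\uprime(h_i(X))\nabla h_i(X)$ by $\sigma^{p-1}\mu^{p-1}M_{\varphi}M_h$ using \eqref{eq:varphi-lb}, \eqref{eq:varphi-ub}, and compactness; summing over $i$ then gives $L_1 M_h \mu^{p-1}$.
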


\begin{proof}
	We continue to use the notation $H_i (X) = \fkh{ \tilde{\varphi}_{\mu} (h_i (X)) }^{p - 1} \tilde{\varphi}_{\mu}\uprime (h_i (X)) \nabla h_i (X)$ for $X \in \cM$ and $i \in \{1, 2, \dotsc, m\}$.
	Then it follows from the relationships \eqref{eq:varphi-lb} and \eqref{eq:varphi-ub} that
	\begin{equation*}
		\norm{H_i (X)}\ff
		\leq \dfrac{\abs{\tilde{\varphi}_{\mu}\uprime (h_i (X))}}{\abs{ \tilde{\varphi}_{\mu} (h_i (X)) }^{1 - p}} \norm{\nabla h_i (X)}\ff
		\leq \sigma^{p - 1} \mu^{p - 1} M_{\varphi} M_h
		= \dfrac{L_1 M_h \mu^{p - 1}}{p m}.
	\end{equation*}
	Hence, we obtain that
	\begin{equation*}
		\norm{\nabla \tilde{f}_{\mu} (X)}\ff
		\leq \norm{\nabla g (X)}\ff
		+ p \sum_{i = 1}^{m} \norm{H_i (X)}\ff
		\leq M_g + L_1 M_h \mu^{p - 1},
	\end{equation*}
	which completes the proof.
\end{proof}

Based on the two lemmas above, we can prove that the composition of $\tilde{f}_{\mu}$ and a retraction also satisfies a certain Lipschitz smoothness property.

\begin{corollary}
	\label{coro:retr}
	Let $\retr$ be a retraction on $\cM$.
	Then, for any $X \in \cM$, $D \in \cT_{X} \cM$, and $\mu \in (0, 1]$, we have
	\begin{equation*}
		\tilde{f}_{\mu} (\retr_{X} (D))
		\leq \tilde{f}_{\mu} (X) + \jkh{\grad\, \tilde{f}_{\mu} (X), D} + \dfrac{R_1^2 L_{\mu} + 2 R_2 M_{\mu}}{2} \norm{D}\fs,
	\end{equation*}
	where $R_1$ and $R_2$ are two constants in Lemma~\ref{le:retr}.
\end{corollary}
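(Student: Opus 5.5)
The plan is to apply Lemma~\ref{le:retr} directly with $\psi = \tilde{f}_{\mu}$. Fix $\mu \in (0, 1]$. By Definition~\ref{def:smoothing}(i) and Assumption~\ref{asp:function}(i), $\tilde{f}_{\mu}$ is continuously differentiable on $\Rns$. Here property~\eqref{eq:varphi-lb} guarantees that $\tilde{\varphi}_{\mu} \geq \sigma\mu > 0$, so $t \mapsto t^p$ is smooth along the range of $\tilde{\varphi}_{\mu}$ and the chain rule applies. Its Euclidean gradient is the formula displayed after \eqref{eq:tilde-f}.

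The two hypotheses of Lemma~\ref{le:retr} then come from the two preceding lemmas. Lemma~\ref{le:sgrad-lip} shows that $\nabla \tilde{f}_{\mu}$ is Lipschitz continuous over $\cM$ with constant $L_{\mu}$. The subsequent lemma shows that $\|\nabla \tilde{f}_{\mu}(X)\|\ff \leq M_{\mu}$ for all $X \in \cM$. So we may take $L_{\psi} = L_{\mu}$ and $M_{\psi} = M_{\mu}$.

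Lemma~\ref{le:retr} then gives the stated inequality at once, with constant $(R_1^2 L_{\mu} + 2R_2 M_{\mu})/2$. We also use $\grad\, \tilde{f}_{\mu}(X) = \proj_{\cT_X\cM}(\nabla \tilde{f}_{\mu}(X))$ to identify the linear term.

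The one point that needs care, which I would state explicitly, is that $R_1$ and $R_2$ do not depend on $\psi$. They depend only on $\cM$ and the retraction $\retr$, so they are the same constants for every $\mu$. This uniformity in $\mu$ is what later analysis needs. There is no real obstacle beyond checking that the lemma's hypotheses, Lipschitz continuity and boundedness over $\cM$ only rather than on all of $\Rns$, match exactly what the two lemmas provide, and they do.
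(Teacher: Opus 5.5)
Your proposal is correct and matches the paper, which omits the proof and simply calls the corollary a direct consequence of Lemma~\ref{le:retr} applied to $\psi = \tilde{f}_{\mu}$, with $L_{\psi} = L_{\mu}$ and $M_{\psi} = M_{\mu}$ supplied by the two preceding lemmas. Your two additional checks are the ones the omitted proof leaves implicit: continuous differentiability of $\tilde{f}_{\mu}$ via \eqref{eq:varphi-lb}, and the fact that $R_1, R_2$ do not depend on $\psi$ (hence not on $\mu$).
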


\begin{proof}
	This is a direct consequence of Lemma~\ref{le:retr} and the proof is therefore omitted here.
\end{proof}

Another noteworthy result is that the smoothing function $\tilde{f}_{\mu}$ enjoys uniform Lipschitz continuity with respect to $\mu$.

\begin{lemma}
	\label{le:sgrad-mu}
	For all $X \in \cM$ and $0 < \mu_2 \leq \mu_1 \leq 1$, it holds that
	\begin{equation*}
		\abs{\tilde{f}_{\mu_1} (X) - \tilde{f}_{\mu_2} (X)} \leq m \kappa \sigma^{p - 1} \dkh{\dfrac{\mu_1}{\mu_2}}^{1 - p} \dkh{\mu_1^p - \mu_2^p}.
	\end{equation*}
\end{lemma}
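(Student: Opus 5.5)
The plan is to reduce the statement to a scalar estimate for a single summand and then sum over $i$. Since $g$ cancels in $\tilde{f}_{\mu_1}(X) - \tilde{f}_{\mu_2}(X)$, the triangle inequality gives
\begin{equation*}
	\abs{\tilde{f}_{\mu_1} (X) - \tilde{f}_{\mu_2} (X)} \leq \sum_{i = 1}^{m} \abs{ \fkh{\tilde{\varphi}_{\mu_1} (t_i)}^p - \fkh{\tilde{\varphi}_{\mu_2} (t_i)}^p },
\end{equation*}
where $t_i = h_i (X)$. It therefore suffices to show that, for any $t \in \bR$ and $0 < \mu_2 \leq \mu_1 \leq 1$,
\begin{equation*}
	\abs{a^p - b^p} \leq \kappa \sigma^{p-1} (\mu_1/\mu_2)^{1-p} (\mu_1^p - \mu_2^p),
\end{equation*}
where $a = \tilde{\varphi}_{\mu_1}(t)$ and $b = \tilde{\varphi}_{\mu_2}(t)$.

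For the scalar estimate, I will apply the mean value theorem to $u \mapsto u^p$ on the segment between $a$ and $b$. Both endpoints are at least $\sigma \mu_2$ by \eqref{eq:varphi-lb}, because $\mu_2 \le \mu_1$. This gives
\begin{equation*}
	\abs{a^p - b^p} \leq p (\sigma \mu_2)^{p-1} \abs{a - b}.
\end{equation*}
Then \eqref{eq:varphi-lip} yields
\begin{equation*}
	\abs{a^p - b^p} \leq p \kappa \sigma^{p-1} \mu_2^{p-1} (\mu_1 - \mu_2).
\end{equation*}

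The main step, and the only real obstacle, is converting $p\,\mu_2^{p-1}(\mu_1-\mu_2)$ into $(\mu_1/\mu_2)^{1-p}(\mu_1^p - \mu_2^p)$. I will use the mean value theorem again, now on $\mu \mapsto \mu^p$ over $[\mu_2,\mu_1]$. For some $\xi \in [\mu_2, \mu_1]$ we have $\mu_1^p - \mu_2^p = p \xi^{p-1} (\mu_1 - \mu_2)$, and since $p - 1 \le 0$ this gives $\mu_1^p - \mu_2^p \geq p \mu_1^{p-1} (\mu_1 - \mu_2)$. Hence
\begin{equation*}
	p \mu_2^{p-1} (\mu_1 - \mu_2) = (\mu_1/\mu_2)^{1-p}\, p \mu_1^{p-1} (\mu_1 - \mu_2) \leq (\mu_1/\mu_2)^{1-p} (\mu_1^p - \mu_2^p).
\end{equation*}
Summing the resulting bound over the $m$ indices gives the factor $m$ and completes the argument.

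Both applications of the mean value theorem are legitimate when $p \in (0,1)$. When $p = 1$ the claim reduces directly to \eqref{eq:varphi-lip}, with $\sigma^{0} = 1$ and the ratio factor equal to $1$, so \eqref{eq:varphi-lb} is not needed in that case, consistent with the convention stated after Definition~\ref{def:smoothing}. The assumption $\mu_1 \le 1$ is not actually used.
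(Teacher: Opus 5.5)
Your proposal is correct and follows the paper's own proof essentially step for step. Like the paper, you bound each summand via the mean value theorem on $u \mapsto u^p$ with the lower bound $\sigma\mu_2$ and the $\kappa$-Lipschitz property in $\mu$, then use the mean value theorem on $\mu \mapsto \mu^p$ to get $\mu_1^p - \mu_2^p \ge p\mu_1^{p-1}(\mu_1-\mu_2)$, and finally sum over the $m$ indices.
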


\begin{proof}
	According to the mean value theorem, we have
	\begin{equation}
		\label{eq:varphi-p}
		\begin{aligned}
			\abs{\fkh{ \tilde{\varphi}_{\mu_1} (h_i (X)) }^p - \fkh{ \tilde{\varphi}_{\mu_2} (h_i (X)) }^p}
			\leq {} & p \sigma^{p - 1} \mu_2^{p - 1} \abs{\tilde{\varphi}_{\mu_1} (h_i (X)) - \tilde{\varphi}_{\mu_2} (h_i (X))} \\
			\leq {} & p \kappa \sigma^{p - 1} \mu_2^{p - 1} \abs{\mu_1 - \mu_2},
		\end{aligned}
	\end{equation}
	where the last inequality follows from \eqref{eq:varphi-lip}.
	Applying the mean value theorem again leads to that
	\begin{equation}
		\label{eq:mu-p}
		\abs{\mu_1^p - \mu_2^p}
		\geq p \mu_1^{p - 1} \abs{\mu_1 - \mu_2}.
	\end{equation}
	Hence, in view of two relationships \eqref{eq:varphi-p} and \eqref{eq:mu-p}, we arrive at
	\begin{equation}
		\label{eq:varphi-mu-i}
		\abs{\fkh{ \tilde{\varphi}_{\mu_1} (h_i (X)) }^p - \fkh{ \tilde{\varphi}_{\mu_2} (h_i (X)) }^p}
		\leq \kappa \sigma^{p - 1} \dkh{ \dfrac{\mu_1}{\mu_2} }^{1 - p} \abs{\mu_1^p - \mu_2^p}.
	\end{equation}
	The last thing to do in the proof is to combine the inequality \eqref{eq:varphi-mu-i} with
	\begin{equation*}
		\abs{\tilde{f}_{\mu_1} (X) - \tilde{f}_{\mu_2} (X)} \leq \sum_{i = 1}^{m} \abs{\fkh{ \tilde{\varphi}_{\mu_1} (h_i (X)) }^p - \fkh{ \tilde{\varphi}_{\mu_2} (h_i (X)) }^p}.
	\end{equation*}
	We complete the proof.
\end{proof}

We end this subsection by providing both lower and upper bounds for the smoothing function over the compact manifold $\cM$.

\begin{lemma}
	\label{le:bound}
	There exist two constants $\underline{f}$ and $\overline{f}$ such that
	\begin{equation}
		\label{eq:bound}
		\underline{f}
		\leq \tilde{f}_{\mu} (X)
		\leq \overline{f},
	\end{equation}
	for any $X \in \cM$ and $\mu \in (0, 1]$.
\end{lemma}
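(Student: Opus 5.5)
The plan is to bound the two pieces of $\tilde{f}_{\mu}$ separately, using compactness of $\cM$ for $g$ and the uniform-in-$\mu$ estimate \eqref{eq:varphi-lip} for the penalty terms.

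\emph{The smooth part.} Since $g$ is continuous and $\cM$ is compact, $g$ attains a finite minimum $g_{\min}$ and maximum $g_{\max}$ on $\cM$.

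\emph{Lower bound.} Each term $[\tilde{\varphi}_{\mu}(h_i(X))]^p$ is nonnegative, because $\tilde{\varphi}_{\mu}$ maps into $\bR_{+}$ (indeed it is at least $\sigma\mu>0$ by \eqref{eq:varphi-lb}). Hence $\tilde{f}_{\mu}(X) \geq g(X) \geq g_{\min}$, and we may take $\underline{f} := g_{\min}$.

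\emph{Upper bound.} The task is to bound $\tilde{\varphi}_{\mu}(h_i(X))$ uniformly in $X \in \cM$ and $\mu \in (0,1]$. Fix $\mu \in (0,1]$. Applying \eqref{eq:varphi-lip} with $\mu_1 = \mu$ and $\mu_2 = 1$ gives
\begin{equation*}
\tilde{\varphi}_{\mu}(t) \leq \tilde{\varphi}_{1}(t) + \kappa(1-\mu) \leq \tilde{\varphi}_{1}(t) + \kappa .
\end{equation*}
This comparison with the fixed level $\mu = 1$ is the key step, since it removes the dependence on $\mu$.

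Now $\tilde{\varphi}_{1}$ is continuous, and $h_i$ is continuous on the compact set $\cM$. So $\tilde{\varphi}_{1} \circ h_i$ is bounded on $\cM$ by some constant $B_i$. Set $B := \max_i B_i + \kappa$. Then $0 < \tilde{\varphi}_{\mu}(h_i(X)) \leq B$ for all $X \in \cM$, $\mu \in (0,1]$ and all $i$.

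Since $t \mapsto t^p$ is increasing on $\bR_{+}$, each penalty term satisfies $[\tilde{\varphi}_{\mu}(h_i(X))]^p \leq B^p$. Therefore $\tilde{f}_{\mu}(X) \leq g_{\max} + m B^p =: \overline{f}$.

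\emph{Where care is needed.} The only subtle point is obtaining uniformity in $\mu$ without relying on the limit property \eqref{eq:varphi-limit}. The Lipschitz-in-$\mu$ property \eqref{eq:varphi-lip}, applied against $\mu = 1$, handles this directly, so I expect no real obstacle.
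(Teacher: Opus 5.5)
Your proof is correct, and it is more careful than the paper's, which only says that the bounds follow from the continuity of $\tilde{f}_{\mu}$ and the compactness of $\cM$. Read literally, that argument gives, for each fixed $\mu$, bounds that may depend on $\mu$. To make it uniform over $\mu \in (0,1]$, one has to observe that $(X,\mu) \mapsto \tilde{f}_{\mu}(X)$ extends continuously to the compact set $\cM \times [0,1]$ by setting it equal to $f$ at $\mu = 0$. That extension needs \eqref{eq:varphi-limit} at $\mu = 0$ and some continuity in $\mu$, such as \eqref{eq:varphi-lip}, for $\mu > 0$. Your route avoids the joint-continuity argument entirely. The lower bound comes for free from $\tilde{\varphi}_{\mu} \geq 0$, giving $\underline{f} = \min_{\cM} g$ with no $\mu$-dependence. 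The upper bound reduces, via \eqref{eq:varphi-lip} against the fixed level $\mu = 1$, to compactness applied to the single continuous function $\tilde{\varphi}_1 \circ h_i$. Your version thus gives an elementary, explicit-constant proof of exactly the uniformity the later analysis needs: in Lemma~\ref{le:sum-as}, the bound $\tilde{f}_{\mu_{k+1}}(X_{k+1}) \geq \underline{f}$ is used for every $k$. The paper's one-liner buys only brevity.
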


\begin{proof}
	This result follows immediately from the continuity of $\tilde{f}_{\mu}$ and the compactness of $\cM$.
	The proof is thus omitted.
\end{proof}

\subsection{Stationarity Condition}

The algorithm proposed in this paper is based on the smoothing approximation technique.
Thus, it is natural that the convergence result is closely tied to the specific smoothing function employed.

Below is the definition of the Riemannian subdifferential associated with the smoothing function, which serves as a foundational concept in our analysis.

\begin{definition}[\cite{Zhang2024riemannian}]
	\label{def:rsubd}
	The Riemannian subdifferential of $f$ associated with the smoothing function $\tilde{f}_{\mu}$ at $X \in \cM$ is defined as
	\begin{equation*}
		\tilde{\partial}\lsfR f (X) = \hkh{ D \in \cT_X \cM \mid \grad\, \tilde{f}_{\mu} (Z) \to D, \cM \ni Z \to X, \mu \downarrow 0 }.
	\end{equation*}
\end{definition}

In light of \cite[Theorem 1]{Zhang2024riemannian}, a necessary condition for a point $X \in \cM$ to be a local minimizer of problem~\eqref{opt:rnlip} is that $0 \in \tilde{\partial}\lsfR f (X)$.
This conclusion naturally motivates the definition of a stationary point given below.

\begin{definition}
	A point $X \in \cM$ is called a stationary point of problem~\eqref{opt:rnlip} if it satisfies the inclusion $0 \in \tilde{\partial}\lsfR f (X)$, namely,
	\begin{equation}
		\label{eq:stationary}
		\liminf_{\cM \ni Z \to X, \, \mu \downarrow 0} \norm{\grad\, \tilde{f}_{\mu} (Z)}\ff = 0.
	\end{equation}
\end{definition}

Building upon the preceding definition, we introduce the concept of an approximate first-order stationary point, which arises from a perturbation of the stationarity condition \eqref{eq:stationary}.

\begin{definition}
	\label{def:stationary}
	A point $X \in \cM$ is called an $\epsilon$-approximate stationary point of problem~\eqref{opt:rnlip} if there exists a smoothing parameter $\mu \in (0, \epsilon]$ such that
	\begin{equation}
		\label{eq:app-sta}
		\norm{\grad\, \tilde{f}_{\mu} (X)}\ff \leq \epsilon.
	\end{equation}
\end{definition}

The requirement~\eqref{eq:app-sta} can also be interpreted as an approximate stationarity condition of a smoothing formulation that is sufficiently close to problem~\eqref{opt:rnlip}.
Similar definitions have been widely adopted in the literature \cite{Garmanjani2013smoothing,Zhang2024riemannian}.
In the remainder of this paper, we devote our attention to the development of a smoothing algorithm for problem~\eqref{opt:rnlip} and the analysis of its iteration complexity in reaching an $\epsilon$-approximate stationary point.

\begin{remark}
	When $p = 1$, we can formulate a smoothing function based on the Moreau envelope.
	Under this setting, the notion of an $\epsilon$-approximate stationary point in Definition~\ref{def:stationary} is consistent with that introduced in \cite{Beck2023dynamic}.
	As a result, the iteration complexity derived in the sequel naturally specializes to this case.
\end{remark}

\begin{remark}
	In the majority of existing works, an $\epsilon$-approximate stationary point for non-Lipschitz optimization problems is defined by excluding the regions near non-Lipschitz points or approximating the associated function values.
	We refer interested readers to \cite{Chen2021high,Chen2019complexity,Zhang2024riemannian} for further details.
	Our definition through the smoothing function serves a similar purpose.
	For example, the smoothing function based on \eqref{eq:tilde-abs} introduces a mild perturbation in the neighborhoods of non-Lipschitz points.
\end{remark}

\section{An Adaptive Smoothing Algorithm}

\label{sec:algorithm}

Leveraging the constructions in Section~\ref{sec:smoothing}, we now devise a smoothing algorithm to solve problem~\eqref{opt:rnlip}.
This approach proceeds by performing Riemannian gradient steps to minimize a smoothing surrogate of the objective function, where the stepsize is updated by an adaptive scheme.
In addition, a decreasing sequence for the smoothing parameter is prescribed in advance.
The integration of these ingredients results in a fully parameter-free algorithm, which does not require any prior knowledge of problem-specific parameters and yet ensures convergence guarantees.

\subsection{Basic Framework}

The smoothing function constructed in \eqref{eq:tilde-f} furnishes an effective approximation to the original objective function.
A natural strategy for solving problem~\eqref{opt:rnlip} is to employ the Riemannian gradient algorithm to minimize this smoothing function, while progressively reducing the smoothing parameter to refine the approximation.
Accordingly, we consider the following update procedure for the $k$-th iteration,
\begin{equation}
	\label{eq:srg}
	X_{k + 1} = \retr_{X_k} \dkh{ - \gamma_k \grad\, \tilde{f}_{\mu_k} (X_k) },
\end{equation}
where $\gamma_k > 0$ and $\mu_k > 0$ are the current stepsize and smoothing parameter, respectively.
The remaining challenge lies in determining the appropriate update rules for both the stepsize and the smoothing parameter.

In contrast to existing methods \cite{Wang2025distributionally,Zhang2024riemannian} that update the smoothing parameter only upon satisfying certain conditions, we assign the smoothing parameter at iteration $k$ directly as
\begin{equation*}
	\mu_k = k^{- 1 / (4 - p)}.
\end{equation*}
This configuration is inspired by the variable smoothing technique \cite{Bohm2021variable,Bot2015variable}.
We have tailored the decay rate of the smoothing parameter to accommodate the non-Lipschitz nature of problem~\eqref{opt:rnlip}, which plays a pivotal role in characterizing the iteration complexity.

A prevalent approach for stepsize selection relies on backtracking line search.
In the context of Riemannian optimization, such a procedure entails substantial computational overhead, as each backtracking step requires an additional retraction to project the iterate back onto the manifold (see \cite{Beck2023dynamic,Chen2020proximal,Zhang2024riemannian}).
Consequently, line-search methods often become computationally inefficient in this setting.
In light of this consideration, we instead turn our attention to AdaGrad-type algorithms \cite{Duchi2011adaptive,Gratton2024complexity}, which will be elaborated in the next subsection.

\subsection{Algorithm Design}

To better illustrate our proposed strategy for stepsizes, we begin by reviewing the AdaGrad algorithm in the Euclidean setting.
Let $\psi: \Rns \to \bR$ be a continuously differentiable function to be minimized.
By accumulating the squared norms of past gradients, AdaGrad adapts the stepsize based on the overall scale of the optimization landscape.
Specifically, AdaGrad updates the iterate according to
\begin{equation*}
	\left\{
	\begin{aligned}
		X_{k + 1} & = X_k - \dfrac{1}{\eta_k} \nabla \psi (X_k), \\
		\eta_{k + 1}^2 & = \eta_k^2 + \norm{\nabla \psi (X_{k + 1})}\fs,
	\end{aligned}
	\right.
\end{equation*}
with $X_0 \in \Rns$ and $\eta_0 > 0$.
We refer interested readers to \cite{Duchi2011adaptive,Gratton2024complexity} for further details about AdaGrad.

We may naturally incorporate the update scheme of AdaGrad into the iterative process \eqref{eq:srg}.
However, as indicated by Lemma~\ref{le:sgrad-lip}, the Lipschitz constant of $\nabla \tilde{f}_{\mu}$ scales as $O (\mu^{p - 2})$, a factor not accounted for in the original AdaGrad formulation.
To navigate this challenge, we devise a novel adaptive stepsize strategy in accordance with the current smoothing parameter.
The complete framework is summarized in Algorithm~\ref{alg:ASRGA}, which is named {\it Adaptive Smoothing Riemannian Gradient Algorithm} and abbreviated to ASRGA.
In contrast to the classical AdaGrad method, we accumulate the squared norm of the smoothing Riemannian gradient after scaling by the smoothing parameter in \eqref{eq:update-eta}.
Likewise, the stepsize itself is also scaled by the smoothing parameter in \eqref{eq:update-as}.
This carefully designed scaling mechanism constitutes an essential ingredient for our theoretical development.

\begin{algorithm2e}[ht]
	\caption{Adaptive Smoothing Riemannian Gradient Algorithm (ASRGA).}
	\label{alg:ASRGA}
	\KwIn{$X_{0} = X_{1} \in \cM$, $\mu_0 \in (0, 1]$, and $\eta_0 > 0$.}
	
	\For{$k = 1, 2, \dotsc$}{
		
		Compute $\mu_k = k^{- 1 / (4 - p)}$ and
		\begin{equation}
			\label{eq:update-eta}
			\eta_k^2 = \eta_{k - 1}^2 + \mu_k^{2 - p} \norm{\grad\, \tilde{f}_{\mu_k} (X_{k})}\fs.
		\end{equation}
		
		Update $X_{k + 1}$ by
		\begin{equation}
			\label{eq:update-as}
			X_{k + 1} = \retr_{X_{k}} \dkh{ - \dfrac{\mu_k^{2 - p}}{\eta_k} \grad\, \tilde{f}_{\mu_k} (X_{k}) }.
		\end{equation}
		
	}

\end{algorithm2e}

\begin{remark}
	A direct extension of AdaGrad to our setting can be expressed as
	\begin{equation}
		\label{eq:adagrad}
		\left\{
		\begin{aligned}
			X_{k + 1} & = \retr_{X_k} \dkh{ - \grad\, \tilde{f}_{\mu_k} (X_k) / \eta_k }, \\
			\eta_{k + 1}^2 & = \eta_k^2 + \norm{\grad\, \tilde{f}_{\mu_{k + 1}} (X_{k + 1})}\fs,
		\end{aligned}
		\right.
	\end{equation}
	with $X_0 \in \cM$ and $\eta_0 > 0$.
	Our empirical analysis demonstrates that the global convergence of the algorithmic framework \eqref{eq:adagrad} can be guaranteed with the iteration complexity $O (\epsilon^{2 (p - 4) / p})$.
	In the case where $p = 1$, this iteration complexity is limited to $O (\epsilon^{-6})$, considerably inferior to existing results \cite{Beck2023dynamic}.
\end{remark}

\section{Convergence Analysis}

\label{sec:convergence}

This section delves into a comprehensive convergence analysis of Algorithm~\ref{alg:ASRGA}, thereby corroborating the effectiveness of the adaptive stepsize strategy.
Specifically, we establish that any accumulation point of the generated sequence is a stationary point.
And the iteration complexity of Algorithm \ref{alg:ASRGA} is provided to attain an approximate stationary point.

The lemma below establishes a fundamental inequality concerning the cumulative squared norms of smoothing Riemannian gradients, serving as a cornerstone in the analysis that follows.

\begin{lemma}
	\label{le:sum-as}
	Let $\{X_k\}$ be the sequence generated by Algorithm \ref{alg:ASRGA}.
	Then, for any $k \in \bN$, it holds that
	\begin{equation}
		\label{eq:sum-as}
		\sum_{j = 1}^{k} \dfrac{\mu_j^{2 - p}}{\eta_j} \norm{\grad\, \tilde{f}_{\mu_j} (X_j)}\fs
		\leq \overline{f} - \underline{f} + C_f \mu_1^{p}
		+ \sum_{j = 1}^{k} \dfrac{\rho_j \mu_j^{2 (2 - p)}}{2 \eta_j^2} \norm{\grad\, \tilde{f}_{\mu_j} (X_j)}\fs.
	\end{equation}
	Here, $\overline{f}$ and $\underline{f}$ are two constants defined in Lemma~\ref{le:bound}, and $C_f = 2^{(1 - p) / (4 - p)} m \kappa \sigma^{p - 1}$ is a positive constant.
\end{lemma}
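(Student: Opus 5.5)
The plan is to combine the one-step descent inequality of Corollary~\ref{coro:retr} at each iteration, then telescope over $j = 1, \dotsc, k$, controlling the changes in the smoothing parameter with Lemma~\ref{le:sgrad-mu}. Throughout, I take $\rho_j := R_1^2 L_{\mu_j} + 2 R_2 M_{\mu_j}$, the constant produced by Corollary~\ref{coro:retr} at $\mu = \mu_j$; this is presumably the $\rho_j$ intended in \eqref{eq:sum-as}. Since $\mu_j = j^{-1/(4-p)} \in (0,1]$, the corollary applies at every iteration.

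\textbf{Step 1: one-step descent.} For each $j$, apply Corollary~\ref{coro:retr} with $X = X_j$, $\mu = \mu_j$ and $D = -(\mu_j^{2-p}/\eta_j)\,\grad\, \tilde f_{\mu_j}(X_j) \in \cT_{X_j}\cM$. By \eqref{eq:update-as}, $\retr_{X_j}(D) = X_{j+1}$, so
\begin{equation*}
\tilde f_{\mu_j}(X_{j+1}) \leq \tilde f_{\mu_j}(X_j) - \frac{\mu_j^{2-p}}{\eta_j}\norm{\grad\, \tilde f_{\mu_j}(X_j)}\fs + \frac{\rho_j \mu_j^{2(2-p)}}{2\eta_j^2}\norm{\grad\, \tilde f_{\mu_j}(X_j)}\fs .
\end{equation*}
Rearranging gives the $j$-th term on the left of \eqref{eq:sum-as} bounded by $\tilde f_{\mu_j}(X_j) - \tilde f_{\mu_j}(X_{j+1})$ plus the $j$-th quadratic term on the right.

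\textbf{Step 2: telescoping.} Summing over $j = 1,\dotsc,k$, it remains to bound $\sum_{j=1}^k [\tilde f_{\mu_j}(X_j) - \tilde f_{\mu_j}(X_{j+1})]$. I split each summand as
\begin{equation*}
[\tilde f_{\mu_j}(X_j) - \tilde f_{\mu_{j+1}}(X_{j+1})] + [\tilde f_{\mu_{j+1}}(X_{j+1}) - \tilde f_{\mu_j}(X_{j+1})].
\end{equation*}
The first brackets telescope to $\tilde f_{\mu_1}(X_1) - \tilde f_{\mu_{k+1}}(X_{k+1})$, which is at most $\overline f - \underline f$ by Lemma~\ref{le:bound}.

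\textbf{Step 3: the smoothing-change terms (main obstacle).} For the second brackets, note $0 < \mu_{j+1} < \mu_j \leq 1$, so Lemma~\ref{le:sgrad-mu} gives
\begin{equation*}
\abs{\tilde f_{\mu_{j+1}}(X_{j+1}) - \tilde f_{\mu_j}(X_{j+1})} \leq m\kappa\sigma^{p-1}\dkh{\frac{\mu_j}{\mu_{j+1}}}^{1-p}(\mu_j^p - \mu_{j+1}^p).
\end{equation*}
The key observation is that the ratio is uniformly bounded: $\mu_j/\mu_{j+1} = ((j+1)/j)^{1/(4-p)} \leq 2^{1/(4-p)}$. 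Hence the prefactor is at most $2^{(1-p)/(4-p)} m\kappa\sigma^{p-1} = C_f$. The remaining sum $\sum_{j=1}^k (\mu_j^p - \mu_{j+1}^p)$ telescopes to $\mu_1^p - \mu_{k+1}^p \leq \mu_1^p$, giving the term $C_f\mu_1^p$.

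Combining Steps 1--3 yields \eqref{eq:sum-as}. The only delicate point is this uniform control of the ratio $\mu_j/\mu_{j+1}$, since it is what makes the specific decay schedule $\mu_k = k^{-1/(4-p)}$ produce the constant $C_f$. The rest is bookkeeping.
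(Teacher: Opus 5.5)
Your proposal is correct and follows essentially the same route as the paper. You combine the one-step descent from Corollary~\ref{coro:retr} (with the same $\rho_j = R_1^2 L_{\mu_j} + 2R_2 M_{\mu_j}$), bound the smoothing-change term via Lemma~\ref{le:sgrad-mu} together with $\mu_j/\mu_{j+1} \le 2^{1/(4-p)}$, and then telescope and apply Lemma~\ref{le:bound}.
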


\begin{proof}
	Let $\rho_k = R_1^2 L_{\mu_k} + 2 R_2 M_{\mu_k}$.
	According to the updating rule \eqref{eq:update-as} of $X_{k + 1}$ and Corollary~\ref{coro:retr}, it follows that
	\begin{equation}
		\label{eq:des-f}
		\begin{aligned}
			\tilde{f}_{\mu_k} (X_{k + 1})
			= {} & \tilde{f}_{\mu_k} ( \retr_{X_k} ( - \mu_k^{2 - p} \grad\, \tilde{f}_{\mu_k} (X_k) / \eta_k ) ) \\
			\leq {} & \tilde{f}_{\mu_k} (X_k)
			- \dfrac{\mu_k^{2 - p}}{\eta_k} \norm{\grad\, \tilde{f}_{\mu_k} (X_k)}\fs
			+ \dfrac{\rho_k \mu_k^{2(2 - p)}}{2 \eta_k^2} \norm{\grad\, \tilde{f}_{\mu_k} (X_k)}\fs.
		\end{aligned}
	\end{equation}
	By invoking the results of Lemma~\ref{le:sgrad-mu}, we obtain that
	\begin{equation}
		\label{eq:des-mu}
		\begin{aligned}
			\tilde{f}_{\mu_{k}} (X_{k + 1})
			- \tilde{f}_{\mu_{k + 1}} (X_{k + 1})
			\geq {} & - m \kappa \sigma^{p - 1} \dkh{\dfrac{\mu_{k}}{\mu_{k + 1}}}^{1 - p} \dkh{\mu_{k}^{p} - \mu_{k + 1}^{p}} \\
			\geq {} & - 2^{(1 - p) / (4 - p)} m \kappa \sigma^{p - 1} \dkh{\mu_{k}^{p} - \mu_{k + 1}^{p}},
		\end{aligned}
	\end{equation}
	where the last inequality holds since $\mu_{k} / \mu_{k + 1} = ((k + 1) / k )^{1 / (4 - p)} \leq 2^{1 / (4 - p)}$.
	Collecting two relationships \eqref{eq:des-f} and \eqref{eq:des-mu} together yields that
	\begin{equation*}
		\begin{aligned}
			\dfrac{\mu_k^{2 - p}}{\eta_k} \norm{\grad\, \tilde{f}_{\mu_k} (X_k)}\fs
			\leq {} & \tilde{f}_{\mu_k} (X_k) - \tilde{f}_{\mu_{k + 1}} (X_{k + 1})
			+ C_f \dkh{\mu_{k}^{p} - \mu_{k + 1}^{p}} \\
			& + \dfrac{\rho_k \mu_k^{2(2 - p)}}{2 \eta_k^2} \norm{\grad\, \tilde{f}_{\mu_k} (X_k)}\fs.
		\end{aligned}
	\end{equation*}
	Then it can be readily verified that
	\begin{equation*}
		\begin{aligned}
			\sum_{j = 1}^{k} \dfrac{\mu_j^{2 - p}}{\eta_j} \norm{\grad\, \tilde{f}_{\mu_j} (X_j)}\fs
			\leq {} & \tilde{f}_{\mu_1} (X_1) - \tilde{f}_{\mu_{k + 1}} (X_{k + 1})
			+ C_f \dkh{\mu_1^{p} - \mu_{k + 1}^{p}} \\
			& + \sum_{j = 1}^{k} \dfrac{\rho_j \mu_j^{2 (2 - p)}}{2 \eta_j^2} \norm{\grad\, \tilde{f}_{\mu_j} (X_j)}\fs.
		\end{aligned}
	\end{equation*}
	Combining the above inequality with Lemma~\ref{le:bound} and the fact $\mu_{k + 1} \geq 0$, we arrive at the assertion \eqref{eq:sum-as} of this lemma and finish the proof.
\end{proof}

We need the following technical results in the subsequent analysis.

\begin{lemma}
	\label{le:adagrad}
	The following two statements hold.
	\begin{enumerate}[(i)]
		
		\item Let $\{a_k\}$ be a nonnegative sequence and $\xi > 0$ be a constant. 
		Then we have 
		\begin{equation*}
			\sum_{j = 1}^{k} \dfrac{a_j}{\xi + \sum_{i = 1}^{j} a_i} \leq \log \dkh{ 1 + \dfrac{1}{\xi} \sum_{j = 1}^{k} a_j }.
		\end{equation*}
		
		\item Let $a$, $b$, and $c$ be three nonnegative constants. If $a t \leq b + c t$ for $t > 1$, we have
		\begin{equation*}
			t \leq \max \hkh{ e^{b / c}, \dfrac{4c^2}{a^2} }.
		\end{equation*}
		
	\end{enumerate}
\end{lemma}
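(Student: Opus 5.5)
For part (i), I will compare each summand with an integral of $1/t$. Set $S_j = \xi + \sum_{i = 1}^{j} a_i$ for $j \geq 1$ and $S_0 = \xi$, so that $a_j = S_j - S_{j - 1}$ and $S_j \geq S_{j - 1} > 0$. The key step is the elementary inequality
\begin{equation*}
	\dfrac{S_j - S_{j - 1}}{S_j} \leq \int_{S_{j - 1}}^{S_j} \dfrac{dt}{t} = \log S_j - \log S_{j - 1},
\end{equation*}
which holds because $1/t \geq 1/S_j$ on $[S_{j - 1}, S_j]$. Summing over $j = 1, \dotsc, k$ telescopes the right-hand side to $\log (S_k / \xi)$, which is exactly $\log ( 1 + \xi^{-1} \sum_{j = 1}^{k} a_j )$. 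The case $a_j = 0$ is covered automatically, since both sides of the displayed inequality then vanish.

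For part (ii), I read the hypothesis as $a t \leq b + c \log t$, since otherwise the exponential bound does not fit. Assume $a > 0$. I will argue by cases on which term on the right dominates.

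In the first case, $c \log t \leq b$. Then $\log t \leq b / c$, so $t \leq e^{b / c}$ directly.

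In the second case, $c \log t > b$. The hypothesis then gives $a t \leq 2 c \log t$. I will bound the logarithm by $\log t \leq \sqrt{t}$ for $t > 1$, which follows from $\log x \leq x - 1 < x$ applied to $x = \sqrt{t}$ (so $\log t = 2 \log \sqrt{t} \leq 2 \sqrt{t}$). Using $\log t \leq \sqrt{t}$ yields $a t \leq 2 c \sqrt{t}$, hence $\sqrt{t} \leq 2 c / a$ and $t \leq 4 c^2 / a^2$.

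The main obstacle is this logarithm estimate. If I only have the weaker bound $\log t \leq 2 \sqrt{t}$, the constant becomes $16 c^2 / a^2$ instead of the claimed $4 c^2 / a^2$. To recover the sharp constant, I would first try the tight inequality $\log t \leq (2/e) \sqrt{t}$. It comes from maximizing $\log t / \sqrt{t}$, whose maximum $2/e$ is attained at $t = e^2$. This gives $a t \leq 2 c (2/e) \sqrt{t}$, so $\sqrt{t} \leq 4 c / (e a) < 2 c / a$, and therefore $t \leq 4 c^2 / a^2$ as claimed.

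Combining the two cases, $t$ is bounded by the larger of the two bounds, which proves (ii).
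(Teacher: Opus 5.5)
Your proof is correct and is the standard self-contained argument for both facts; the paper gives no proof of its own and only cites Gratton et al. For (i) you telescope the integral comparison $a_j/S_j \leq \log S_j - \log S_{j-1}$. For (ii) you read the hypothesis as $at \leq b + c \log t$, which is how it is used in the subsequent proposition, split into the cases $c \log t \leq b$ and $c \log t > b$, and then apply $\log t \leq \sqrt{t}$.

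The one thing to fix in the write-up is the first justification of $\log t \leq \sqrt{t}$. Applying $\log x \leq x - 1$ at $x = \sqrt{t}$ only gives $\log t \leq 2\sqrt{t}$. State directly the bound $\log t \leq (2/e)\sqrt{t} < \sqrt{t}$, which comes from maximizing $\log t / \sqrt{t}$ at $t = e^2$, as you already do at the end.
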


\begin{proof}
	The proof of these two inequalities can be found in \cite[Lemma~3.1]{Gratton2024complexity} and \cite[Lemma~3.6]{Gratton2025simple}, respectively, which is omitted here. 
\end{proof}

To analyze the convergence of the sequence $\{X_k\}$ generated by Algorithm~\ref{alg:ASRGA}, we define an auxiliary sequence $\{\tilde{X}_{k} = X_{N_k}\}$ and $\{\tilde{\mu}_k = \mu_{N_k}\}$, where
\begin{equation*}
	N_k \in \argmin_{j \in \{\lfloor k / 2\rfloor, \dotsc, k\}} \norm{ \grad\, \tilde{f}_{\mu_j} (X_{j}) }\ff.
\end{equation*}
Rather than taking the most recent point, $\tilde{X}_k$ is chosen from a sliding window of past iterates as the one achieving the smallest norm of the smoothing Riemannian gradient.
Drawing on the above results, we move on to characterize the rate at which the smoothing Riemannian gradient decays.

\begin{proposition}
	Let $\{\tilde{X}_{k}\}$ be the sequence generated by Algorithm \ref{alg:ASRGA}.
	Then, for all $k \in \bN$, we have
	\begin{equation}
		\label{eq:grad-limit}
		\norm{\grad\, \tilde{f}_{\tilde{\mu}_k} (\tilde{X}_{k})}\ff
		\leq \dfrac{\eta_0 Q}{k^{1 / (4 - p)}},
	\end{equation}
	where $Q > 0$ is a constant defined as
	\begin{equation*}
		Q = \max \hkh{ \sqrt{2}, \; \dfrac{16 C_g^2}{\eta_0^2}, \; \exp \dkh{\dfrac{\overline{f} - \underline{f} + C_f \mu_1^{p}}{C_g}} },
	\end{equation*}
	with $C_g = R_1^2 (L_g \mu_1^{2 - p} + L_1 L_{h} \mu_1 + L_2) + 2 R_2 (M_g \mu_1^{2 - p} + L_1 M_h \mu_1)$.
\end{proposition}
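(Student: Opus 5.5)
The plan is to run the AdaGrad-type argument of \cite{Gratton2024complexity} on the scaled quantities. Write $a_j = \mu_j^{2-p}\norm{\grad\, \tilde{f}_{\mu_j}(X_j)}\fs$, so that $\eta_k^2 = \eta_0^2 + \sum_{j=1}^k a_j$ by \eqref{eq:update-eta}. The proof then proceeds in three steps: bound the error term in Lemma~\ref{le:sum-as}, convert \eqref{eq:sum-as} into a bound on $\eta_k$ via Lemma~\ref{le:adagrad}(ii), and finally extract the minimal gradient norm over the window $\{\lfloor k/2\rfloor,\dots,k\}$.

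\textbf{Step 1 (error term).} Since $\mu_j \le \mu_1 \le 1$ and $p\le 1$, we have $\mu_j^{2-p}\rho_j \le C_g$. Indeed, $\mu^{2-p}L_\mu = L_g\mu^{2-p} + L_1L_h\mu + L_2$ and $\mu^{2-p}M_\mu = M_g\mu^{2-p}+L_1M_h\mu$, and each of these terms is monotone nondecreasing in $\mu$. Hence
\[
\frac{\rho_j\mu_j^{2(2-p)}}{2\eta_j^2}\norm{\grad\,\tilde f_{\mu_j}(X_j)}\fs \le \frac{C_g}{2}\,\frac{a_j}{\eta_0^2+\sum_{i\le j}a_i}.
\]
By Lemma~\ref{le:adagrad}(i), the sum of the right-hand side is at most $\frac{C_g}{2}\log(\eta_k^2/\eta_0^2) = C_g\log(\eta_k/\eta_0)$.

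\textbf{Step 2 (bound on $\eta_k$).} Because $\eta_j\le\eta_k$, the left-hand side of \eqref{eq:sum-as} is at least $\eta_k^{-1}\sum_{j\le k}a_j = (\eta_k^2-\eta_0^2)/\eta_k$. Setting $t=\eta_k/\eta_0$ gives
\[
\eta_0(t - 1/t) \le \overline f-\underline f + C_f\mu_1^p + C_g\log t .
\]
If $t\le\sqrt2$ we are done. Otherwise $t-1/t\ge t/2$, and using $\log t\le \sqrt t$ we obtain an inequality of the form $\frac{\eta_0}{2}s^2 \le B + C_g s$ with $s=\sqrt t>1$ and $B = \overline f-\underline f + C_f\mu_1^p$. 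This is the main obstacle: arranging the inequality so that Lemma~\ref{le:adagrad}(ii) applies and produces exactly the constants $e^{B/C_g}$ and $16C_g^2/\eta_0^2$. I would apply Lemma~\ref{le:adagrad}(ii) in the variable $t$, either with $a=\eta_0/2$ and the $\log$ handled directly, or after dividing by $s$. The conclusion should be $\eta_k\le \eta_0 Q$ for all $k$; the constants may need some adjustment.

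\textbf{Step 3 (window extraction).} Since $\mu_j\ge\mu_k$ for $j\le k$,
\[
\eta_k^2 \ge \sum_{j=\lfloor k/2\rfloor}^k \mu_j^{2-p}\norm{\grad\,\tilde f_{\mu_j}(X_j)}\fs \ge \frac{k}{2}\,\mu_k^{2-p}\norm{\grad\,\tilde f_{\tilde\mu_k}(\tilde X_k)}\fs .
\]
Substituting $\mu_k^{2-p}=k^{-(2-p)/(4-p)}$ yields $\norm{\grad\,\tilde f_{\tilde\mu_k}(\tilde X_k)}\fs \le 2\eta_0^2Q^2 k^{-2/(4-p)}$, i.e., a bound of order $k^{-1/(4-p)}$. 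The factor $\sqrt2$ must then be absorbed into $Q$, presumably through how Step 2 is concluded, for example by bounding $\eta_k^2\le \eta_0^2 Q$ rather than $\eta_0^2Q^2$. In that case the bound reads $2\eta_0^2 Q k^{-2/(4-p)}\le \eta_0^2Q^2k^{-2/(4-p)}$ since $Q\ge\sqrt2\ge 2/Q$. This gives \eqref{eq:grad-limit}.
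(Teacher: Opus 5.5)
Your Steps 1 and 2 are correct as far as they go. In Step 2 the first option is the right one: for $t=\eta_k/\eta_0>\sqrt2$ you get $\frac{\eta_0}{2}t\le B+C_g\log t$ with $B=\overline{f}-\underline{f}+C_f\mu_1^p$. Lemma~\ref{le:adagrad}(ii), in its intended form $at\le b+c\log t$, then gives $t\le Q$.

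The gap is at the end. Combining $\eta_k\le\eta_0Q$ with Step 3 only yields $\norm{\grad\,\tilde{f}_{\tilde{\mu}_k}(\tilde{X}_k)}\ff\le\sqrt{2}\,\eta_0Q\,k^{-1/(4-p)}$. This is weaker than \eqref{eq:grad-limit} by a factor $\sqrt2$, and your patch does not close it:
\begin{enumerate}[(i)]
\item Nothing in Step 2 gives $\eta_k^2\le\eta_0^2Q$. The lemma bounds $t$, not $t^2$, by $Q$, and $Q>1$.
\item Even granting that bound, $2\eta_0^2Q\le\eta_0^2Q^2$ needs $Q\ge2$. Your justification $Q\ge\sqrt2\ge2/Q$ only says $Q^2\ge2$.
\end{enumerate}
The loss is built into your choice of variable. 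From $t\le Q$ you get $S_k:=\sum_{j\le k}\mu_j^{2-p}\norm{\grad\,\tilde{f}_{\mu_j}(X_j)}\fs=\eta_k^2-\eta_0^2\le\eta_0^2(Q^2-1)$. The window argument then gives $2\eta_0^2(Q^2-1)k^{-2/(4-p)}$, which is at most $\eta_0^2Q^2k^{-2/(4-p)}$ only when $Q=\sqrt2$.

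The missing step is to bound $S_k$ directly, using the variable $u=\sqrt{2S_k}/\eta_0$ instead of $\eta_k/\eta_0$; this is what the paper does.
\begin{enumerate}[(i)]
\item If $S_k<\eta_0^2$, then $S_k\le\eta_0^2Q^2/2$ trivially, because $Q\ge\sqrt2$.
\item If $S_k\ge\eta_0^2$, then $\eta_k^2=\eta_0^2+S_k\le2S_k$. So the left side of \eqref{eq:sum-as} is at least $S_k/\eta_k\ge\sqrt{S_k/2}=\frac{\eta_0}{2}u$. Your Step 1 bound $\frac{C_g}{2}\log(1+S_k/\eta_0^2)$ is at most $\frac{C_g}{2}\log(2S_k/\eta_0^2)=C_g\log u$. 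Since $u\ge\sqrt2>1$, Lemma~\ref{le:adagrad}(ii) gives $u\le Q$, i.e., $S_k\le\eta_0^2Q^2/2$.
\end{enumerate}
Now run Step 3 with $S_k$ in place of $\eta_k^2$: $\frac{k^{2/(4-p)}}{2}\norm{\grad\,\tilde{f}_{\tilde{\mu}_k}(\tilde{X}_k)}\fs\le S_k\le\frac{\eta_0^2Q^2}{2}$. This is exactly \eqref{eq:grad-limit}.

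Your version would still give the $O(\epsilon^{p-4})$ complexity in Theorem~\ref{thm:asrga}, since only the constant changes. It does not, however, prove the proposition with the stated $Q$.
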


\begin{proof}
	Let
	\begin{equation*}
		S_k = \sum_{j = 1}^{k} \mu_j^{2 - p} \norm{\grad\, \tilde{f}_{\mu_j} (X_j)}\fs.
	\end{equation*}
	We first consider the scenario where $S_k < \eta_0^2$.
	Then it follows from the relationship $\mu_j \geq \mu_k$ for all $j = 1, 2, \dotsc, k$ that
	\begin{equation*}
		\begin{aligned}
			\eta_0^2
			\geq {} & \mu_k^{2 - p} \sum_{j = 1}^{k} \norm{\grad\, \tilde{f}_{\mu_j} (X_j)}\fs
			\geq \mu_k^{2 - p} \sum_{j = \lfloor k / 2 \rfloor}^{k} \norm{\grad\, \tilde{f}_{\mu_j} (X_j)}\fs \\
			\geq {} & \dfrac{\mu_k^{2 - p} k}{2}\norm{\grad\, \tilde{f}_{\tilde{\mu}_k} (\tilde{X}_k)}\fs
			= \dfrac{k^{2 / (4 - p)}}{2} \norm{\grad\, \tilde{f}_{\tilde{\mu}_k} (\tilde{X}_k)}\fs,
		\end{aligned}
	\end{equation*}
	where the last inequality holds due to the definition of $N_k$ and the last equality comes from the definition of $\mu_k$.
	Consequently, we can obtain that
	\begin{equation*}
		\norm{\grad\, \tilde{f}_{\tilde{\mu}_k} (\tilde{X}_k)}\ff
		\leq \dfrac{\sqrt{2} \eta_0}{k^{1 / (4 - p)}}
		\leq \dfrac{\eta_0 Q}{k^{1 / (4 - p)}}.
	\end{equation*}
	Next, our focus is shifted to the situation where $S_k \geq \eta_0^2$.
	Then it holds that
	\begin{equation*}
		\eta_k^2
		= \eta_0^2 + \sum_{j = 1}^{k} \mu_j^{2 - p} \norm{\grad\, \tilde{f}_{\mu_j} (X_j)}\fs
		= \eta_0^2 + S_k
		\leq 2 S_k.
	\end{equation*}
	Since $\eta_j \leq \eta_k$ for all $j = 1, 2, \dotsc, k$, we have
	\begin{equation}
		\label{eq:sum-lhs}
		\sum_{j = 1}^{k} \dfrac{\mu_j^{2 - p}}{\eta_j} \norm{\grad\, \tilde{f}_{\mu_j} (X_j)}\fs
		\geq \dfrac{1}{\eta_k} \sum_{j = 1}^{k} \mu_j^{2 - p} \norm{\grad\, \tilde{f}_{\mu_j} (X_j)}\fs
		= \dfrac{S_k}{\eta_k}
		\geq \dfrac{\sqrt{S_k}}{\sqrt{2}}.
	\end{equation}
	Moreover, it follows from the relationship $\rho_j = R_1^2 L_{\mu_j} + 2 R_2 M_{\mu_j} \leq C_g \mu_j^{p - 2}$ that
	\begin{equation*}
		\sum_{j = 1}^{k} \dfrac{\rho_j \mu_j^{2 (2 - p)}}{2 \eta_j^2} \norm{\grad\, \tilde{f}_{\mu_j} (X_j)}\fs
		\leq \dfrac{C_g}{2} \sum_{j = 1}^{k} \dfrac{\mu_j^{2 - p}}{\eta_j^2} \norm{\grad\, \tilde{f}_{\mu_j} (X_j)}\fs.
	\end{equation*}
	By invoking the results of Lemma~\ref{le:adagrad} (i), we obtain that
	\begin{equation*}
		\begin{aligned}
			\sum_{j = 1}^{k} \dfrac{\mu_j^{2 - p}}{\eta_j^2} \norm{\grad\, \tilde{f}_{\mu_j} (X_j)}\fs
			= {} & \sum_{j = 1}^{k} \dfrac{\mu_j^{2 - p} \norm{\grad\, \tilde{f}_{\mu_j} (X_j)}\fs }{\eta_0^2 + \sum_{i = 1}^{j} \mu_i^{2 - p} \norm{\grad\, \tilde{f}_{\mu_i} (X_i)}\fs} \\
			\leq {} & \log \dkh{1 + \dfrac{1}{\eta_0^2} \sum_{j = 1}^{k} \mu_j^{2 - p} \norm{\grad\, \tilde{f}_{\mu_j} (X_j)}\fs} \\
			= {} & \log \dkh{ 1 + \dfrac{S_k}{\eta_0^2} },
		\end{aligned}
	\end{equation*}
	which together with the condition $S_k \geq \eta_0^2$ implies that
	\begin{equation*}
		\sum_{j = 1}^{k} \dfrac{\mu_j^{2 - p}}{\eta_j^2} \norm{\grad\, \tilde{f}_{\mu_j} (X_j)}\fs
		\leq \log \dkh{\dfrac{2 S_k}{\eta_0^2}}.
	\end{equation*}
	Hence, it is straightforward to verify that
	\begin{equation}
		\label{eq:sum-rhs}
		\sum_{j = 1}^{k} \dfrac{\rho_j \mu_j^{2 (2 - p)}}{2 \eta_j^2} \norm{\grad\, \tilde{f}_{\mu_j} (X_j)}\fs
		\leq \dfrac{C_g}{2} \log \dkh{\dfrac{2 S_k}{\eta_0^2}}.
	\end{equation}
	Collecting three relationships \eqref{eq:sum-as}, \eqref{eq:sum-lhs} and \eqref{eq:sum-rhs} together, we then arrive at
	\begin{equation*}
		\dfrac{\sqrt{S_k}}{\sqrt{2}}
		\leq \overline{f} - \underline{f} + C_f \mu_1^{p}
		+ \dfrac{C_g}{2} \log \dkh{\dfrac{2 S_k}{\eta_0^2}},
	\end{equation*}
	which, after a suitable rearrangement, can be equivalently written as
	\begin{equation*}
		\dfrac{\eta_0}{2} \sqrt{ \dfrac{2 S_k}{\eta_0^2} }
		\leq \overline{f} - \underline{f} + C_f \mu_1^{p}
		+ C_g \log \dkh{ \sqrt{\dfrac{2 S_k}{\eta_0^2}} }.
	\end{equation*}
	According to Lemma~\ref{le:adagrad} (ii), it follows that
	\begin{equation*}
		\sqrt{ \dfrac{2 S_k}{\eta_0^2} }
		\leq \max \hkh{ \dfrac{16 C_g^2}{\eta_0^2}, \; \exp \dkh{\dfrac{\overline{f} - \underline{f} + C_f \mu_1^{p}}{C_g}} }
		\leq Q.
	\end{equation*}
	As a result, we have
	\begin{equation*}
		\begin{aligned}
			\dfrac{\eta_0^2 Q^2}{2}
			\geq {} & S_k
			= \sum_{j = 1}^{k} \mu_j^{2 - p} \norm{\grad\, \tilde{f}_{\mu_j} (X_j)}\fs
			\geq \mu_k^{2 - p} \sum_{j = 1}^{k} \norm{\grad\, \tilde{f}_{\mu_j} (X_j)}\fs \\
			\geq {} & \mu_k^{2 - p} \sum_{j = \lfloor k / 2 \rfloor}^{k} \norm{\grad\, \tilde{f}_{\mu_j} (X_j)}\fs
			\geq \dfrac{k^{2 / (4 - p)}}{2} \norm{\grad\, \tilde{f}_{\tilde{\mu}_k} (\tilde{X}_k)}\fs,
		\end{aligned}
	\end{equation*}
	which indicates that
	\begin{equation*}
		\norm{\grad\, \tilde{f}_{\tilde{\mu}_k} (\tilde{X}_k)}\ff
		\leq \dfrac{\eta_0 Q}{k^{1 / (4 - p)}}.
	\end{equation*}
	The proof is completed.
\end{proof}

We are now in a position to establish the global convergence and iteration complexity of Algorithm~\ref{alg:ASRGA}, as articulated in the theorem below.

\begin{theorem}
	\label{thm:asrga}
	Let $\{\tilde{X}_{k}\}$ be the sequence generated by Algorithm~\ref{alg:ASRGA}.
	Then $\{\tilde{X}_{k}\}$ has at least one accumulation point and any accumulation point qualifies as a stationary point of problem~\eqref{opt:rnlip}.
	Moreover, Algorithm~\ref{alg:ASRGA} will reach an $\epsilon$-approximate stationary point of problem~\eqref{opt:rnlip} after at most $O (\epsilon^{p - 4})$ iterations.
\end{theorem}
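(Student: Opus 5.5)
The plan is to derive everything from the preceding Proposition, which gives $\|\grad\, \tilde{f}_{\tilde{\mu}_k}(\tilde{X}_k)\|\ff \leq \eta_0 Q k^{-1/(4-p)}$, together with the fact that $\tilde{\mu}_k = \mu_{N_k} \to 0$.

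\emph{Existence of accumulation points.} Every $\tilde{X}_k$ lies in $\cM$, and $\cM$ is compact. So $\{\tilde{X}_k\}$ has at least one accumulation point $X^\ast \in \cM$.

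\emph{Stationarity.} Take a subsequence $\tilde{X}_{k_l} \to X^\ast$. Since $N_{k_l} \geq \lfloor k_l/2 \rfloor$, we have
\begin{equation*}
\tilde{\mu}_{k_l} = N_{k_l}^{-1/(4-p)} \leq \lfloor k_l/2\rfloor^{-1/(4-p)} \to 0 .
\end{equation*}
The Proposition gives $\|\grad\, \tilde{f}_{\tilde{\mu}_{k_l}}(\tilde{X}_{k_l})\|\ff \to 0$. Hence along points $Z = \tilde{X}_{k_l} \to X^\ast$ with $\mu = \tilde{\mu}_{k_l} \downarrow 0$ (passing to a further subsequence so that $\mu$ is monotone, if needed), the smoothing gradients tend to $0$. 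This yields the liminf condition \eqref{eq:stationary}, i.e., $0 \in \tilde{\partial}\lsfR f(X^\ast)$, and $X^\ast$ is a stationary point.

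\emph{Complexity.} By Definition~\ref{def:stationary}, $\tilde{X}_k = X_{N_k}$ is an $\epsilon$-approximate stationary point, with smoothing parameter $\tilde{\mu}_k$, once two conditions hold:
\begin{itemize}
\item[(a)] $\eta_0 Q k^{-1/(4-p)} \leq \epsilon$;
\item[(b)] $\tilde{\mu}_k \leq \epsilon$.
\end{itemize}
Condition (a) holds whenever $k \geq (\eta_0 Q)^{4-p}\epsilon^{p-4}$. For condition (b), note that $\tilde{\mu}_k \leq \lfloor k/2\rfloor^{-1/(4-p)}$. This is at most $\epsilon$ as soon as $\lfloor k/2\rfloor \geq \epsilon^{p-4}$, which holds for $k \geq 2\epsilon^{p-4}+2$. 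Taking
\begin{equation*}
K = \max\hkh{(\eta_0 Q)^{4-p},\, 2}\,\epsilon^{p-4} + 2 ,
\end{equation*}
some iterate $X_{N_K}$ with $N_K \leq K$ is $\epsilon$-approximate stationary. Thus at most $O(\epsilon^{p-4})$ iterations are needed. The constant $Q$ depends only on problem data and $\eta_0$, $\mu_1$, not on $\epsilon$.

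\emph{Main obstacle.} The only delicate point is the stationarity step. The definition of $\tilde{\partial}\lsfR f$ requires $Z \to X^\ast$ and $\mu \downarrow 0$ jointly, and the smoothing parameter must be tied to the point at which the gradient is evaluated. This is why the argument uses the pair $(\tilde{X}_k,\tilde{\mu}_k)$ rather than $\mu_k$. The lower bound $N_k \geq \lfloor k/2\rfloor$ from the sliding-window construction is what guarantees $\tilde{\mu}_k \to 0$ at the same rate as $\mu_k$, up to the constant $2^{1/(4-p)}$.
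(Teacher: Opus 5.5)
Your proposal is correct and follows essentially the same argument as the paper: compactness of $\cM$ gives an accumulation point, and the Proposition's bound combined with $\tilde{\mu}_k \leq \lfloor k/2\rfloor^{-1/(4-p)} \to 0$ gives stationarity. The complexity count uses the same two thresholds on $k$. The only cosmetic difference is that the paper takes the ceiling of the maximum of these thresholds so the index is an integer, and you should do the same for your $K$.
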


\begin{proof}
	Since $\cM$ is a compact manifold, the sequence $\{\tilde{X}_k\}$ generated by Algorithm~\ref{alg:ASRGA} is bounded.
	Then from the Bolzano-Weierstrass theorem, it can be deduced that this sequence has at least one accumulation point.
	Let $X\uast$ be an accumulation point of $\{\tilde{X}_k\}$.
	For notational simplicity, we continue to denote by $\{\tilde{X}_k\}$ the subsequence converging to $X\uast$.
	The completeness of $\cM$ guarantees that $X\uast \in \cM$.
	Upon taking $k \to \infty$ in \eqref{eq:grad-limit}, we immediately arrive at
	\begin{equation*}
		\lim_{k \to \infty} \norm{\grad\, \tilde{f}_{\tilde{\mu}_k} (\tilde{X}_{k})}\ff = 0.
	\end{equation*}
	Moreover, since $N_k \geq \lfloor k / 2 \rfloor$, it holds that
	\begin{equation}
		\label{eq:mu-limit}
		\tilde{\mu}_k
		= \mu_{N_k}
		\leq \mu_{\lfloor k / 2 \rfloor}
		= \dfrac{1}{\lfloor k / 2 \rfloor^{1 / (4 - p)}},
	\end{equation}
	which indicates that $\lim_{k \to \infty} \tilde{\mu}_k  = 0$.
	According to Definition~\ref{def:rsubd}, we conclude that $0 \in \tilde{\partial}\lsfR f (X\uast)$.
	Consequently, the accumulation point $X\uast$ is a stationary point of problem \eqref{opt:rnlip}.
	Now we set
	\begin{equation*}
		k_{\epsilon}\uast = \left\lceil \max \hkh{ \dfrac{(\eta_0 Q)^{4 - p}}{\epsilon^{4 - p}}, \; \dfrac{2}{\epsilon^{4 - p}} + 2 } \right\rceil.
	\end{equation*}
	Then it directly follows from \eqref{eq:grad-limit} and \eqref{eq:mu-limit} that
	\begin{equation*}
		\norm{\grad\, \tilde{f}_{\tilde{\mu}_{k_{\epsilon}\uast}} (\tilde{X}_{k_{\epsilon}\uast})}\ff \leq \epsilon,
		\mbox{~~and~~}
		\tilde{\mu}_{k_{\epsilon}\uast} \leq \epsilon,
	\end{equation*}
	respectively.
	Therefore, the iterate $\tilde{X}_{k_{\epsilon}\uast}$ is an $\epsilon$-approximate stationary point of problem~\eqref{opt:rnlip}.
	We complete the proof.
\end{proof}

Theorem~\ref{thm:asrga} demonstrates that the iteration complexity of Algorithm~\ref{alg:ASRGA} is $O(\epsilon^{p - 4})$ for achieving an $\epsilon$-approximate stationary point of problem~\eqref{opt:rnlip}, which matches existing results in the Euclidean space \cite{Liu2016smoothing}.
For the special case of $p = 1$, corresponding to the classical regime of locally Lipschitz continuous functions, this complexity bound reduces to $O(\epsilon^{- 3})$, consistent with previously established findings on manifolds \cite{Beck2023dynamic}.
To the best of our knowledge, this is the first result regarding the iteration complexity for non-Lipschitz Riemannian optimization problems.

\section{Numerical Results}

\label{sec:numerical}

Preliminary numerical results are presented in this section to validate the effectiveness and efficiency of the proposed algorithm.
All codes are implemented in MATLAB R2018b on a workstation with dual Intel Xeon Gold 6242R CPU processors (at $3.10$ GHz$\times 20 \times 2$) and $510$ GB of RAM under Ubuntu 20.04.
In the subsequent experiments, the retraction operators are realized on the basis of polar decomposition.

\subsection{Comparison on SDL Problems}

We first engage in a numerical comparison between ASRGA and RSSD \cite{Zhang2024riemannian} on the SDL problem~\eqref{opt:sdl}.
Both algorithms construct the smoothing function based on the formulation given in \eqref{eq:tilde-abs}.
What distinguishes RSSD from ASRGA is that the former determines the stepsize at each iteration through a line-search procedure.
The corresponding experiments are configured following the procedures described in \cite{Li2021weakly,Zhang2024riemannian}.
More specifically, we begin by generating a random matrix whose entries are drawn from the standard normal distribution.
This matrix is then orthonormalized to yield $X\uast \in \Onn$, which serves as the underlying orthogonal dictionary.
Next, we randomly generate a sparse coefficient matrix $S \in \Rnm$ such that the entries follow a Bernoulli-Gaussian distribution with parameter $0.5$.
The data matrix $Y$ in problem~\eqref{opt:sdl} is finally constructed as $Y = X\uast S$.

In our experiments, we set the parameter $\eta_0$ to be $10^{-6}$ in ASRGA, while the parameters of RSSD are selected in accordance with the specification prescribed in \cite{Zhang2024riemannian}.
Starting from the same initial point that is randomly generated, both algorithms are employed to solve problem~\eqref{opt:sdl} with $n = 50$ and $m = \lfloor 10 n^{1.5} \rfloor = 3535$.
The termination criterion is imposed by a maximum runtime of $4$ seconds.
To assess the performance of the tested algorithms, we define the error between $X$ and $X\uast$ as
\begin{equation*}
	\mathrm{err} (X, X\uast) = \sum_{i = 1}^{n} \abs{ \max_{i = 1, \dotsc, n} \abs{X_i\zz X\uast_j} - 1},
\end{equation*}
where $X_i$ is the $i$-th column of $X$ and $X\uast_j$ is the $j$-th column of $X\uast$.
It is clear that $\mathrm{err} (X, X\uast) = 0$ if and only if $X$ and $X\uast$ are equal up to permutation and sign ambiguities.
Figure~\ref{fig:sdl} depicts the decay of errors over time for two algorithms with $p \in \{0.2, 0.5, 0.8\}$.
As evidenced therein, ASRGA exhibits a clear advantage in terms of computational efficiency, attaining a substantially higher level of accuracy within the same time budget.

\begin{figure}[t]
	\centering
	\subfigure[$p = 0.2$]{
		\label{subfig:p2}
		\includegraphics[width=0.32\linewidth]{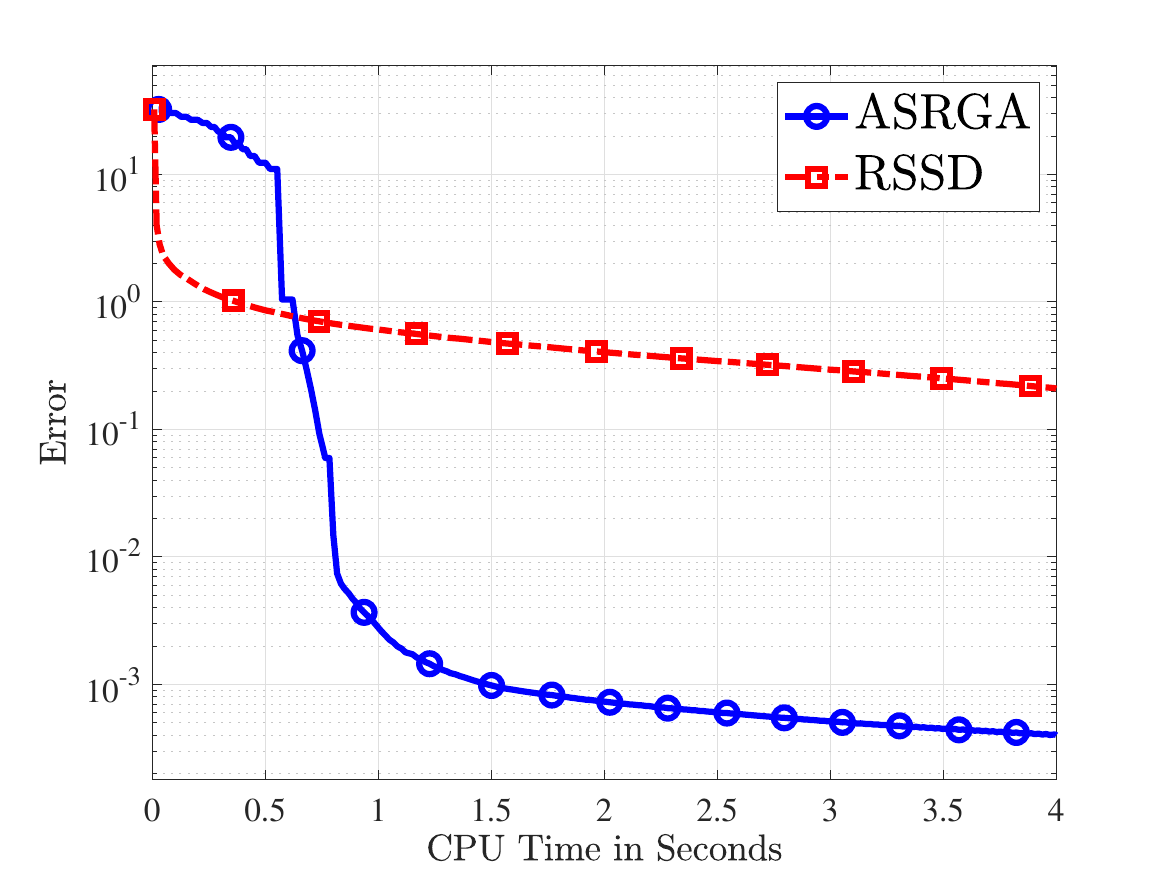}
	}\hspace{-10pt}%
	\subfigure[$p = 0.5$]{
		\label{subfig:p5}
		\includegraphics[width=0.32\linewidth]{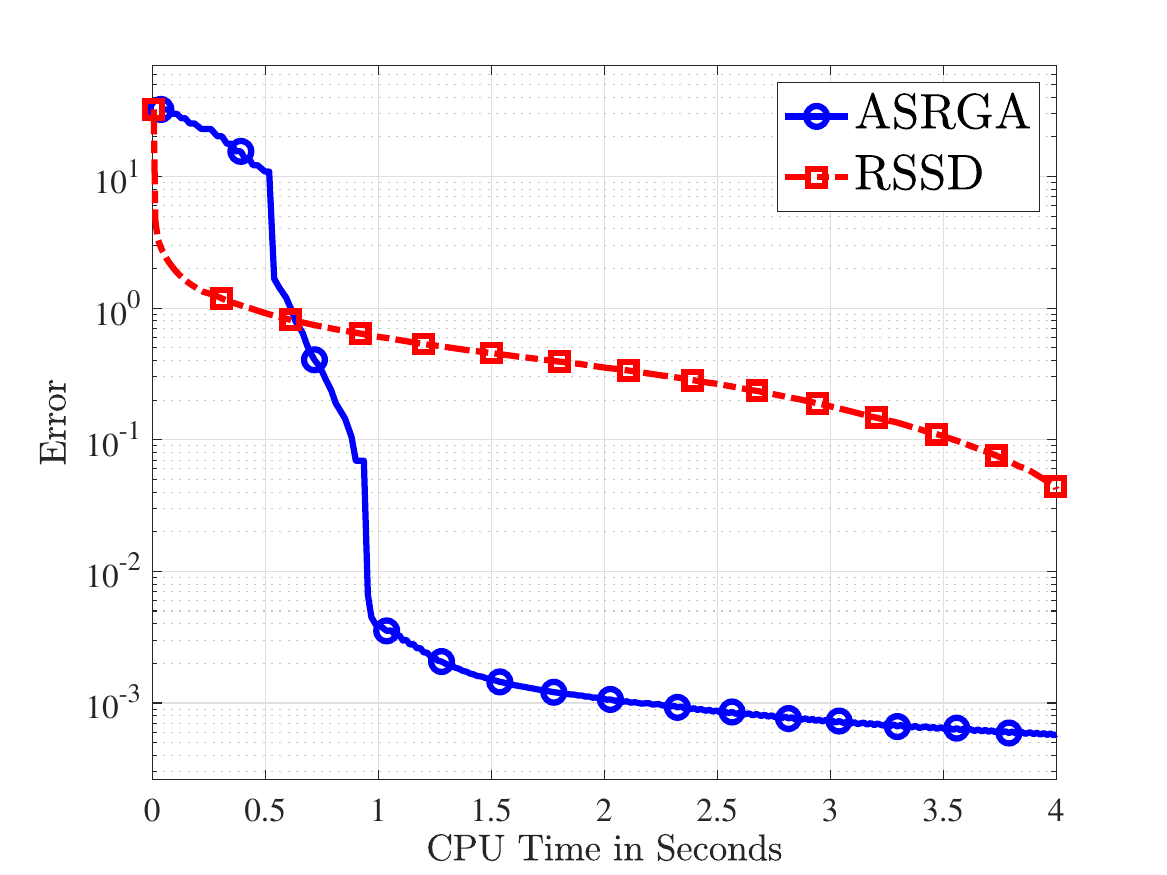}
	}\hspace{-10pt}%
	\subfigure[$p = 0.8$]{
		\label{subfig:p8}
		\includegraphics[width=0.32\linewidth]{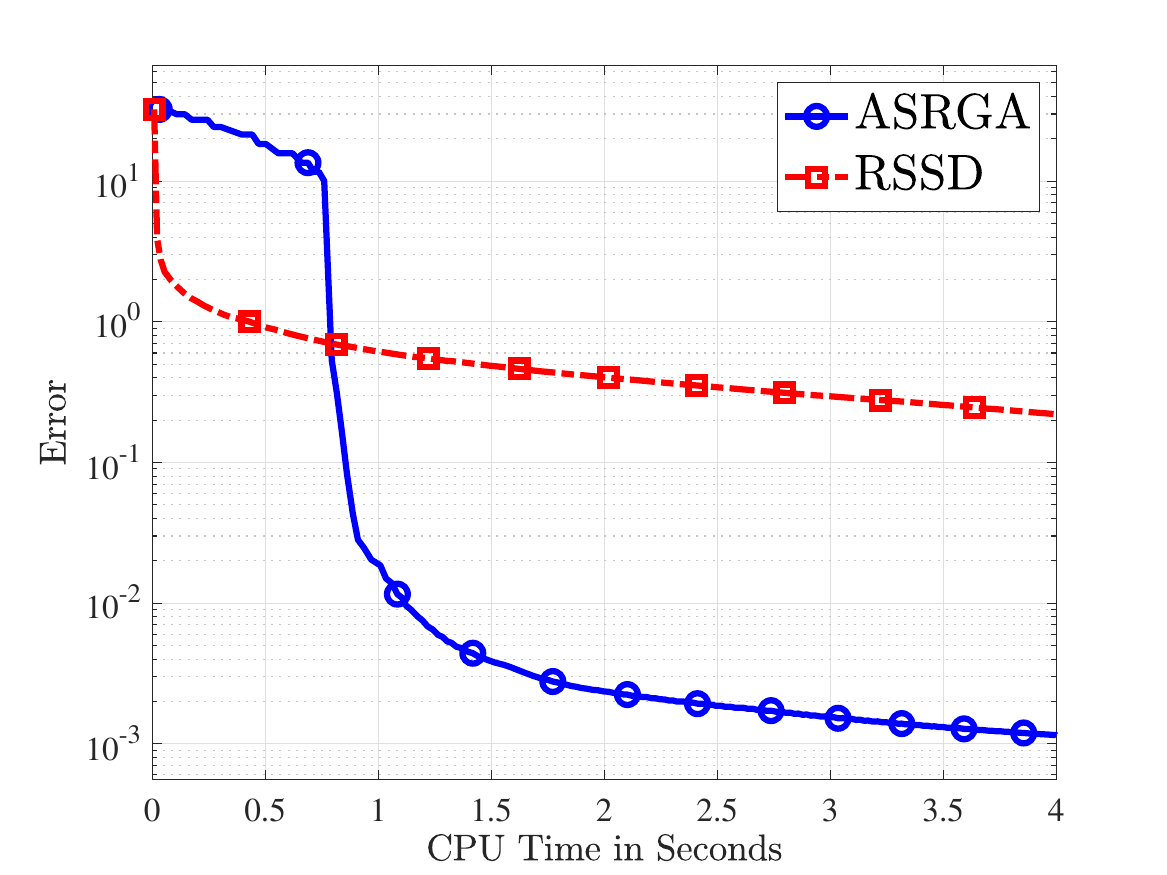}
	}
	\caption{Numerical comparison between ASRGA and RSSD for the SDL problem~\eqref{opt:sdl}.}
	\label{fig:sdl}
\end{figure}

\subsection{Comparison on DPCP Problems}

Our next experiment focuses on a fundamental application in autonomous driving, namely, the road detection problem.
We conduct a comparative evaluation of ASRGA and RSSD \cite{Zhang2024riemannian} on the KITTI dataset \cite{Geiger2013vision}, which is recorded from a moving platform during urban driving in and around Karlsruhe, Germany.
This dataset provides synchronized camera images and the corresponding 3D point clouds collected by a rotating laser scanner.
For each 3D point cloud of a road scene, we aim to identify the points that lie on the road plane (inliers) and those deviating from it (outliers).
Typically, each 3D point cloud contains more than $10^5$ points, with outliers accounting for over $50\%$.
By adopting homogeneous coordinates, this task can be cast as a DPCP problem~\eqref{opt:dpcp} with $n = 4$.
We refer interested readers to \cite{Ding2019noisy,Tsakiris2018dual} for further details on the problem setup and its background.

The parameter $\eta_0$ in ASRGA is set to $10^{-6}$, while the parameters of RSSD are chosen according to the settings in \cite{Zhang2024riemannian}.
Both algorithms are run under a fixed time budget of $0.5$ seconds with the same initial point.
We define the detection error as
\begin{equation*}
	\sqrt{ 1 - \abs{ \jkh{ x_{\mathrm{alg}}, x_{\mathrm{opt}} } }^2 },
\end{equation*}
where $x_{\mathrm{alg}}$ denotes the solution returned by a tested algorithm and $x_{\mathrm{opt}}$ represents the ground-truth solution.
Table~\ref{tb:KITTI} tabulates the final errors achieved by two algorithms together with the required numbers of iterations on six frames from the KITTI dataset.
As evidenced therein, ASRGA consistently attains smaller errors than RSSD under the same time budget, demonstrating its superior efficiency.

\begin{table}[ht]
	\small
	\centering
	\caption{Comparison between ASRGA and RSSD on KITTI with $0.5$ seconds of runtime (\#Inliers: number of inliers, \#Outliers: number of outliers).}
	\label{tb:KITTI}
	\begin{tabular}{
			>{\centering\arraybackslash}p{0.20\textwidth}
			>{\centering\arraybackslash}p{0.08\textwidth}
			>{\centering\arraybackslash}p{0.10\textwidth}
			>{\centering\arraybackslash}p{0.12\textwidth}
			>{\centering\arraybackslash}p{0.12\textwidth}
			>{\centering\arraybackslash}p{0.12\textwidth}
		}
		\toprule
		Dataset & Frame & \#Inliers & \#Outliers & Algorithm & Error \\
		\midrule
		\multirow{4}{*}{\rule{0pt}{3.5ex}KITTI-CITY-5}
		& \multirow{2}{*}{$120$}
		& \multirow{2}{*}{$54461$}
		& \multirow{2}{*}{$63489$}
		& ASRGA & $3.8 \times 10^{-3}$ \\
		& & & & RSSD & $1.7 \times 10^{-2}$ \\
		\cmidrule{2-6}
		& \multirow{2}{*}{$153$}
		& \multirow{2}{*}{$31845$}
		& \multirow{2}{*}{$65042$}
		& ASRGA & $4.2 \times 10^{-3}$ \\
		& & & & RSSD & $3.7 \times 10^{-2}$ \\
		\midrule
		\multirow{4}{*}{\rule{0pt}{3.5ex}KITTI-CITY-48}
		& \multirow{2}{*}{$0$}
		& \multirow{2}{*}{$51096$}
		& \multirow{2}{*}{$66280$}
		& ASRGA & $1.9 \times 10^{-3}$ \\
		& & & & RSSD & $1.1 \times 10^{-2}$ \\
		\cmidrule{2-6}
		& \multirow{2}{*}{$21$}
		& \multirow{2}{*}{$50309$}
		& \multirow{2}{*}{$68749$}
		& ASRGA & $8.7 \times 10^{-4}$ \\
		& & & & RSSD & $1.8 \times 10^{-2}$ \\
		\midrule
		\multirow{4}{*}{\rule{0pt}{3.5ex}KITTI-CITY-71}
		& \multirow{2}{*}{$328$}
		& \multirow{2}{*}{$44631$}
		& \multirow{2}{*}{$77135$}
		& ASRGA & $9.1 \times 10^{-3}$ \\
		& & & & RSSD & $2.3 \times 10^{-2}$ \\
		\cmidrule{2-6}
		& \multirow{2}{*}{$881$}
		& \multirow{2}{*}{$42965$}
		& \multirow{2}{*}{$80643$}
		& ASRGA & $4.0 \times 10^{-4}$ \\
		& & & & RSSD & $7.3 \times 10^{-2}$ \\
		\bottomrule
	\end{tabular}
\end{table}

Finally, Figure~\ref{fig:KITTI} showcases qualitative detection results on two representative frames from the KITTI dataset with inliers in blue and outliers in red, which are visualized by projecting the 3D point clouds onto the images.
We can observe that, ASRGA succeeds in distinguishing the road plane from surrounding objects, whereas RSSD fails to produce a reliable separation.
These results provide compelling evidence of the practical effectiveness and robustness of ASRGA in real-world autonomous driving scenarios.

\begin{figure}[t]
	\centering
	\subfigure[Frame 153 of KITTI-CITY-5]{
		\label{subfig:26_005_0153}
		\includegraphics[width=0.45\linewidth]{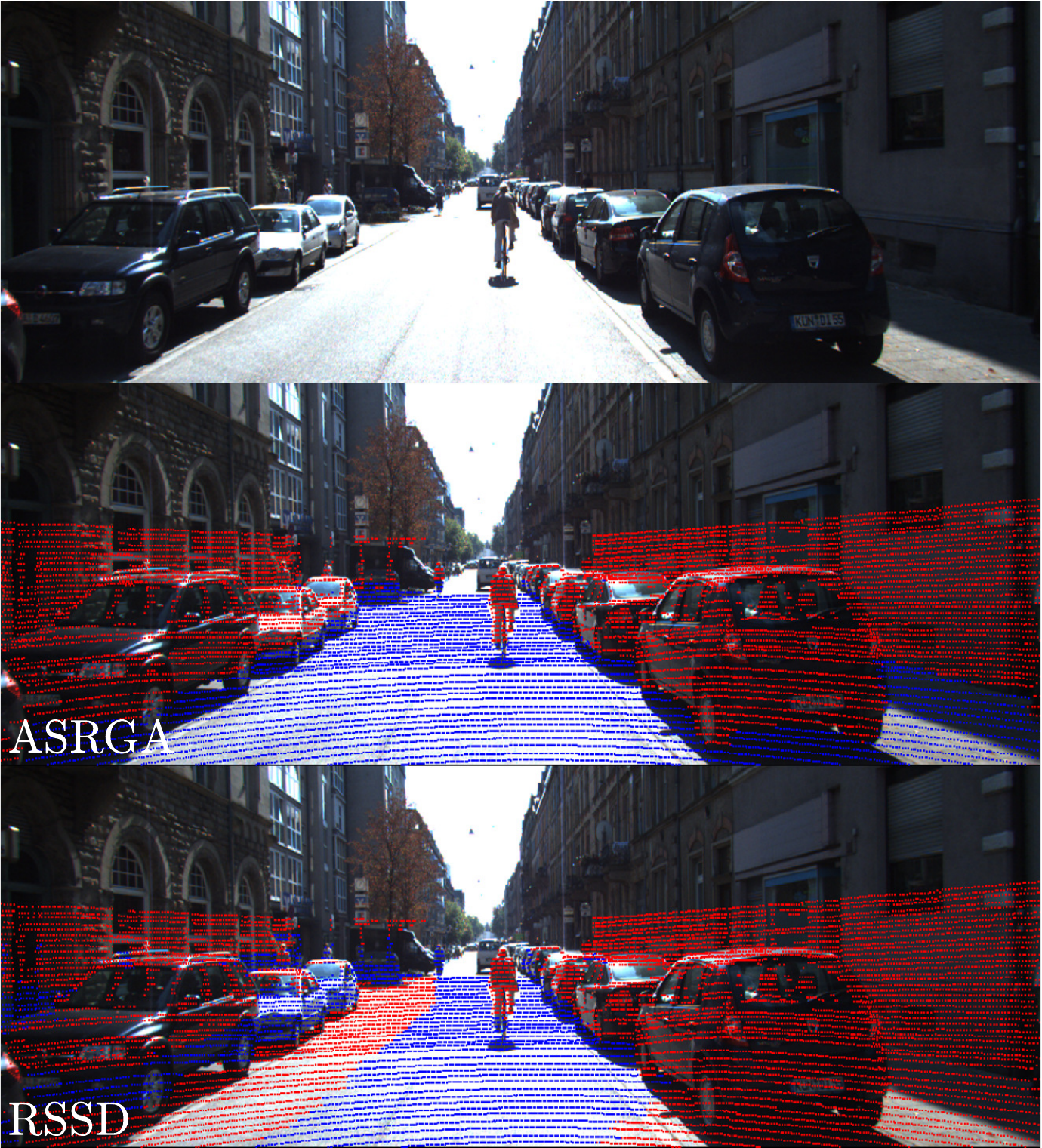}
	}
	\subfigure[Frame 881 of KITTI-CITY-71]{
		\label{subfig:29_071_0881}
		\includegraphics[width=0.45\linewidth]{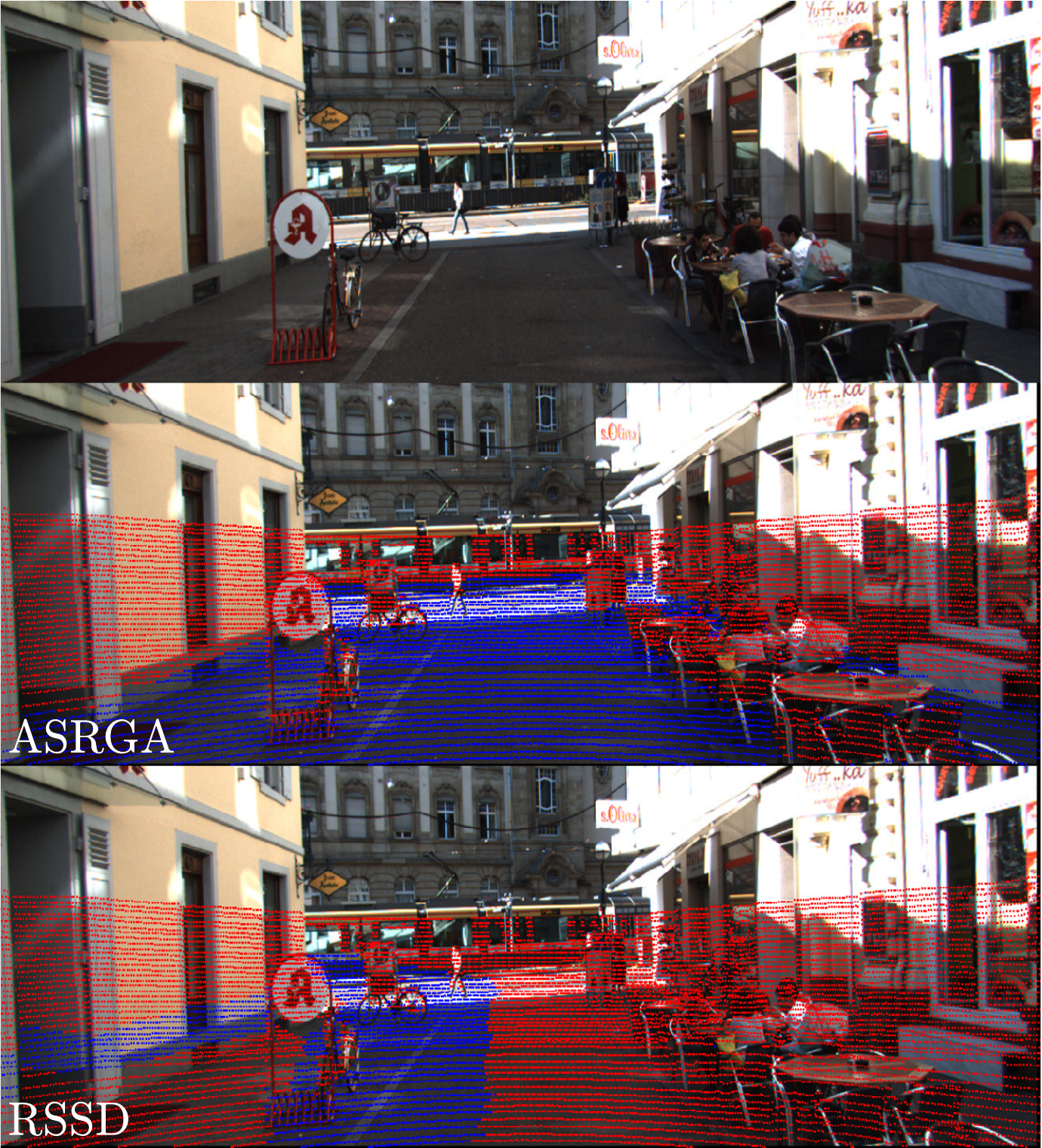}
	}
	\caption{Illustration of two frames from the KITTI dataset.
		The first row corresponds to the raw images.
		The second and third rows display the detection results produced by ASRGA and RSSD, respectively, with inliers in blue and outliers in red.}
	\label{fig:KITTI}
\end{figure}

\subsection{Comparison on SPCA Problems}

In the final stage of numerical experiments, our attention is directed toward evaluating the performance of ASRGA on the SPCA problem~\eqref{opt:spca}.
Since problems of this kind are characterized by $p = 1$, we accordingly choose ManPG \cite{Chen2020proximal} and DSGM \cite{Beck2023dynamic} as benchmark algorithms for a comparative testing.
Both ManPG and DSGM hinge on a line-search procedure for stepsize selection.
Their implementations are sourced from GitHub\footnote{See \url{https://github.com/chenshixiang/ManPG} for ManPG and \url{https://github.com/israelross/DSGM/} for DSGM.}.
We retain the original settings and configure the parameters in accordance with the specifications provided in \cite{Beck2023dynamic,Chen2020proximal}.
The parameter $\eta_0$ in ASRGA is set to $10^{-6}$ in the following experiments.

We draw upon the widely recognized image dataset MNIST \cite{LeCun1998gradient} in the realm of machine learning research, from which the first $m = 10000$ samples with $n = 784$ features are extracted to construct $B \in \Rnm$.
The matrix $A$ in problem~\eqref{opt:spca} is generated by subtracting the sample-mean from each sample of $B$ and then normalizing each row of the resulting matrix to unit norm.
For our testing, we fix the coefficient of the $\ell_1$-norm regularizer as $\lambda = 0.5$, while varying $s$ across the values in $\{30, 40, 50\}$.
All algorithms are initialized from the same starting point that is randomly generated and allowed to run for $5$ seconds.
We adopt the objective function value of problem~\eqref{opt:spca} as the performance metric.

Figure~\ref{fig:spca} illustrates the decline in function values for the tested algorithms with respect to CPU time, with the three subplots corresponding, respectively, to the three different values of $s$.
As shown, ASRGA outperforms both ManPG and DSGM, which reaches the lowest function value within the same allotted time.
Moreover, the superiority of ASRGA becomes more pronounced as the value of $s$ grows.
This phenomenon can be attributed to the fact that larger problem scales entail a higher computational cost for an evaluation of retractions.
Consequently, the line-search procedure becomes increasingly time-consuming, thereby undermining the overall efficiency of ManPG and DSGM.
These results highlight the advantage of the adaptive stepsize strategy employed by ASRGA.

\begin{figure}[t]
	\centering
	\subfigure[$s = 30$]{
		\label{subfig:s30}
		\includegraphics[width=0.32\linewidth]{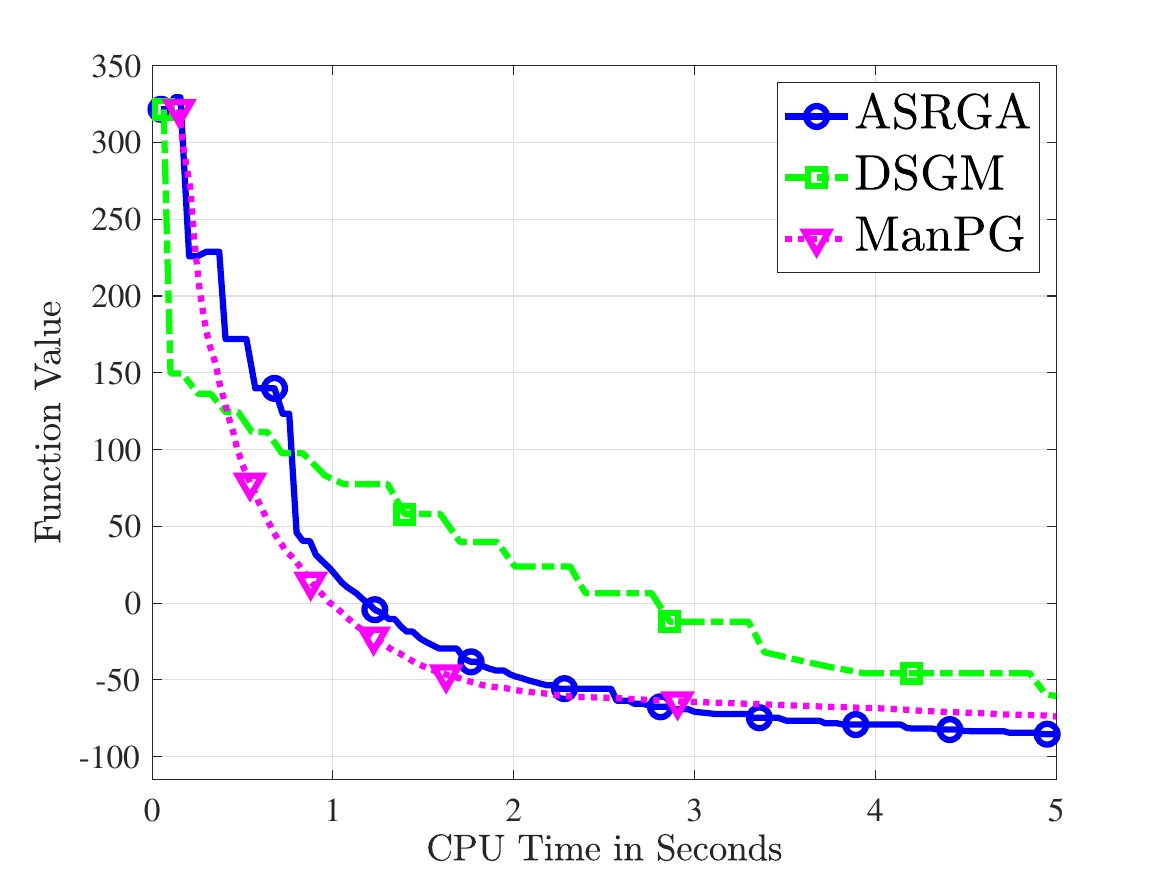}
	}\hspace{-10pt}%
	\subfigure[$s = 40$]{
		\label{subfig:s40}
		\includegraphics[width=0.32\linewidth]{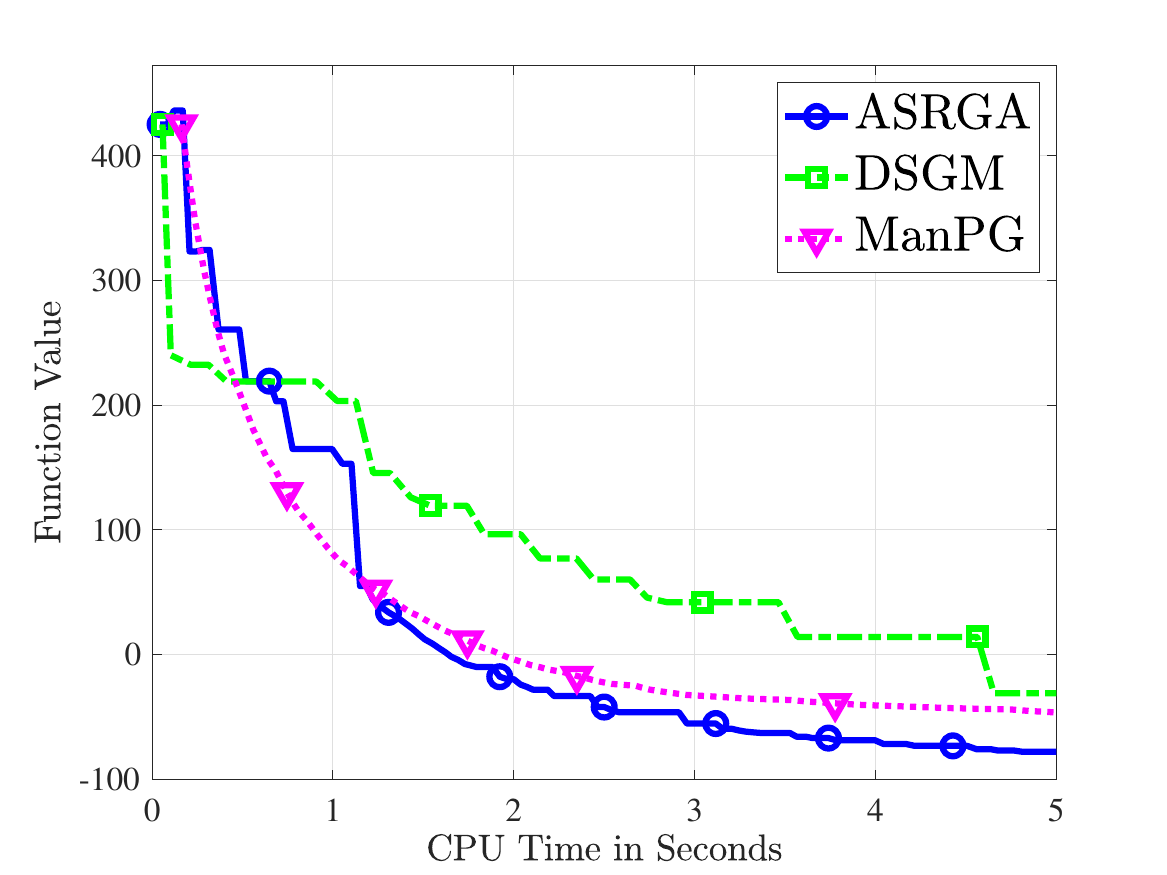}
	}\hspace{-10pt}%
	\subfigure[$s = 50$]{
		\label{subfig:s50}
		\includegraphics[width=0.32\linewidth]{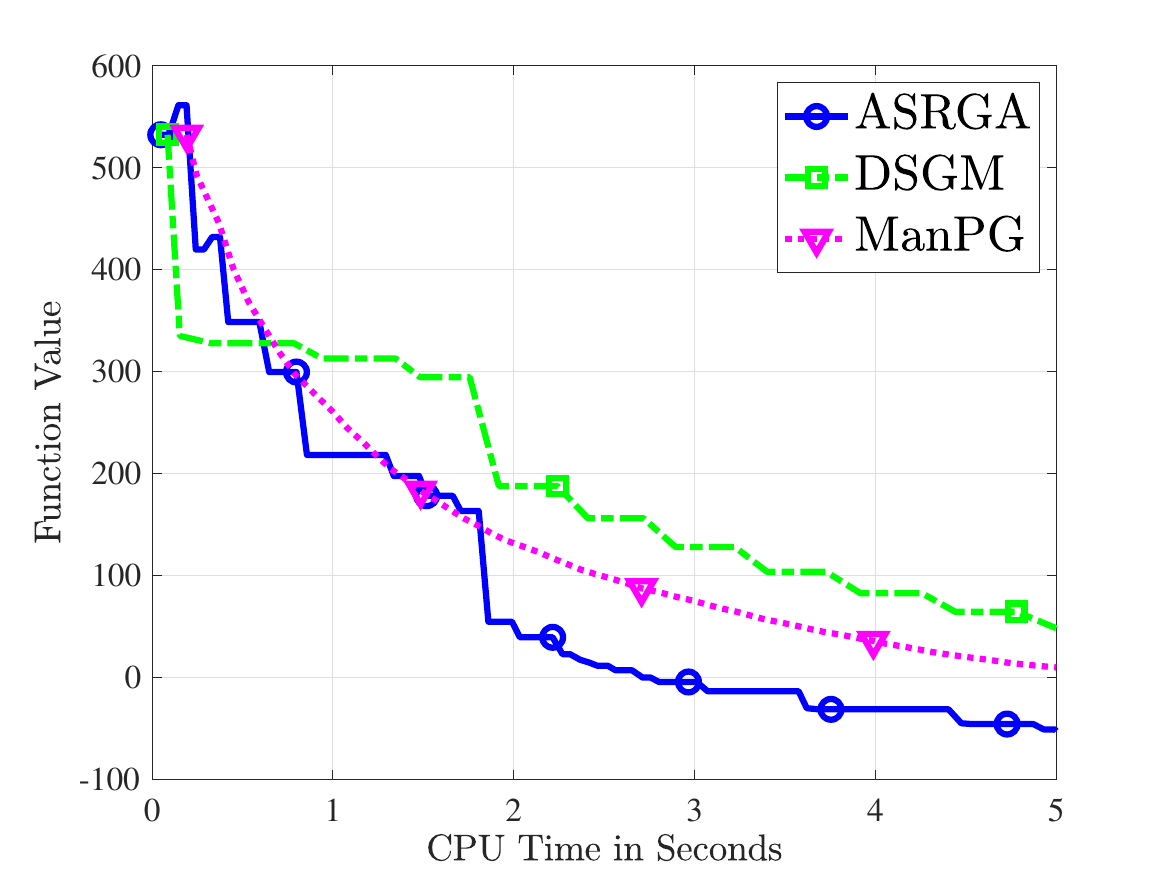}
	}
	\caption{Numerical comparison among ASRGA, DSGM, and ManPG for the SPCA problem~\eqref{opt:spca}.}
	\label{fig:spca}
\end{figure}

\section{Concluding Remarks} 

\label{sec:conclusion}

We develop an efficient and easy-to-implement algorithm ASRGA for a class of non-Lipschitz Riemannian optimization problems of the form \eqref{opt:rnlip}.
Our theoretical analysis demonstrates that an $\epsilon$-approximate stationary point of problem~\eqref{opt:rnlip} can be attained by ASRGA in $O (\epsilon^{p - 4})$ iterations.
In particular, ASRGA enjoys this complexity guarantee while remaining entirely parameter-free, which makes it especially appealing in applications where the Lipschitz constants of the involved smoothing function are unavailable or difficult to estimate.
Preliminary numerical results further validate that the proposed algorithm is not only theoretically sound, but also computationally competitive in practice.
Our algorithm outperforms existing methods not only for non-Lipschitz problems but also for Lipschitz problems.

Several directions deserve further study.
It would be of considerable interest to extend the present framework to stochastic settings for large-scale problems in machine learning and to explore higher-order complexity bounds for a broader class of problems.
The techniques developed herein may serve as a useful step toward a systematic complexity theory for non-Lipschitz optimization on Riemannian manifolds.


\bibliographystyle{abbrv}
\bibliography{library}

\begin{thebibliography}{10}

\bibitem{Absil2008optimization}
P.-A. Absil, R.~Mahony, and R.~Sepulchre.
\newblock {\em Optimization Algorithms on Matrix Manifolds}.
\newblock Princeton University Press, Princeton, 2008.

\bibitem{Beck2023dynamic}
A.~Beck and I.~Rosset.
\newblock A dynamic smoothing technique for a class of nonsmooth optimization
  problems on manifolds.
\newblock {\em SIAM Journal on Optimization}, 33(3):1473--1493, 2023.

\bibitem{Bian2013worst}
W.~Bian and X.~Chen.
\newblock Worst-case complexity of smoothing quadratic regularization methods
  for non-{L}ipschitzian optimization.
\newblock {\em SIAM Journal on Optimization}, 23(3):1718--1741, 2013.

\bibitem{Bian2015complexity}
W.~Bian, X.~Chen, and Y.~Ye.
\newblock Complexity analysis of interior point algorithms for non-{L}ipschitz
  and nonconvex minimization.
\newblock {\em Mathematical Programming}, 149(1):301--327, 2015.

\bibitem{Bohm2021variable}
A.~B{\"o}hm and S.~J. Wright.
\newblock Variable smoothing for weakly convex composite functions.
\newblock {\em Journal of Optimization Theory and Applications}, 188:628--649,
  2021.

\bibitem{Bot2015variable}
R.~I. Bo{\c{t}} and C.~Hendrich.
\newblock A variable smoothing algorithm for solving convex optimization
  problems.
\newblock {\em TOP}, 23:124--150, 2015.

\bibitem{Boumal2023introduction}
N.~Boumal.
\newblock {\em An Introduction to Optimization on Smooth Manifolds}.
\newblock Cambridge University Press, Cambridge, 2023.

\bibitem{Boumal2018global}
N.~Boumal, P.-A. Absil, and C.~Cartis.
\newblock Global rates of convergence for nonconvex optimization on manifolds.
\newblock {\em IMA Journal of Numerical Analysis}, 39(1):1--33, 2018.

\bibitem{Chen2020proximal}
S.~Chen, S.~Ma, A.~M.-C. So, and T.~Zhang.
\newblock Proximal gradient method for nonsmooth optimization over the
  {S}tiefel manifold.
\newblock {\em SIAM Journal on Optimization}, 30(1):210--239, 2020.

\bibitem{Chen2024nonsmooth}
S.~Chen, S.~Ma, A.~M.-C. So, and T.~Zhang.
\newblock Nonsmooth optimization over the {S}tiefel manifold and beyond:
  Proximal gradient method and recent variants.
\newblock {\em SIAM Review}, 66(2):319--352, 2024.

\bibitem{Chen2016augmented}
W.~Chen, H.~Ji, and Y.~You.
\newblock An augmented {L}agrangian method for $\ell_{1}$-regularized
  optimization problems with orthogonality constraints.
\newblock {\em SIAM Journal on Scientific Computing}, 38(4):B570--B592, 2016.

\bibitem{Chen2012smoothing}
X.~Chen.
\newblock Smoothing methods for nonsmooth, nonconvex minimization.
\newblock {\em Mathematical Programming}, 134:71--99, 2012.

\bibitem{Chen2025tight}
X.~Chen, Y.~He, and Z.~Zhang.
\newblock Tight error bounds for the sign-constrained {S}tiefel manifold.
\newblock {\em SIAM Journal on Optimization}, 35(1):302--329, 2025.

\bibitem{Chen2026orthogonal}
X.~Chen, W.~Li, and Q.~Luo.
\newblock Orthogonal nonnegative matrix factorization via minimization over the
  null space.
\newblock {\em to appear in SIAM Journal on Matrix Analysis and Applications},
  2026.

\bibitem{Chen2021high}
X.~Chen and P.~L. Toint.
\newblock High-order evaluation complexity for convexly-constrained
  optimization with non-{L}ipschitzian group sparsity terms.
\newblock {\em Mathematical Programming}, 187(1):47--78, 2021.

\bibitem{Chen2019complexity}
X.~Chen, P.~L. Toint, and H.~Wang.
\newblock Complexity of partially separable convexly constrained optimization
  with non-{L}ipschitzian singularities.
\newblock {\em SIAM Journal on Optimization}, 29(1):874--903, 2019.

\bibitem{Chen2010lower}
X.~Chen, F.~Xu, and Y.~Ye.
\newblock Lower bound theory of nonzero entries in solutions of
  $\ell_2$-$\ell_p$ minimization.
\newblock {\em SIAM Journal on Scientific Computing}, 32(5):2832--2852, 2010.

\bibitem{Deng2025oracle}
K.~Deng, J.~Hu, J.~Wu, and Z.~Wen.
\newblock Oracle complexities of augmented {L}agrangian methods for nonsmooth
  composite optimization on a compact submanifold.
\newblock {\em Mathematics of Operations Research}, pages 1--27, 2025.

\bibitem{Ding2019noisy}
T.~Ding, Z.~Zhu, T.~Ding, Y.~Yang, D.~P. Robinson, M.~C. Tsakiris, and
  R.~Vidal.
\newblock Noisy dual principal component pursuit.
\newblock In {\em Proceedings of the 36th International Conference on Machine
  Learning}, volume~97, pages 1617--1625. PMLR, 2019.

\bibitem{Duchi2011adaptive}
J.~Duchi, E.~Hazan, and Y.~Singer.
\newblock Adaptive subgradient methods for online learning and stochastic
  optimization.
\newblock {\em Journal of Machine Learning Research}, 12(61):2121--2159, 2011.

\bibitem{Garmanjani2013smoothing}
R.~Garmanjani and L.~N. Vicente.
\newblock Smoothing and worst-case complexity for direct-search methods in
  nonsmooth optimization.
\newblock {\em IMA Journal of Numerical Analysis}, 33(3):1008--1028, 2013.

\bibitem{Geiger2013vision}
A.~Geiger, P.~Lenz, C.~Stiller, and R.~Urtasun.
\newblock Vision meets robotics: The {KITTI} dataset.
\newblock {\em The International Journal of Robotics Research},
  32(11):1231--1237, 2013.

\bibitem{Gratton2024complexity}
S.~Gratton, S.~Jerad, and P.~L. Toint.
\newblock Complexity of a class of first-order objective-function-free
  optimization algorithms.
\newblock {\em Optimization Methods and Software}, pages 1--31, 2024.

\bibitem{Gratton2025simple}
S.~Gratton and P.~L. Toint.
\newblock A simple first-order algorithm for full-rank equality constrained
  optimization.
\newblock {\em arXiv:2510.16390}, 2025.

\bibitem{Huang2022riemannian}
W.~Huang and K.~Wei.
\newblock {R}iemannian proximal gradient methods.
\newblock {\em Mathematical Programming}, 194(1-2):371--413, 2022.

\bibitem{Huang2023inexact}
W.~Huang and K.~Wei.
\newblock An inexact {R}iemannian proximal gradient method.
\newblock {\em Computational Optimization and Applications}, 85(1):1--32, 2023.

\bibitem{Jiang2023exact}
B.~Jiang, X.~Meng, Z.~Wen, and X.~Chen.
\newblock An exact penalty approach for optimization with nonnegative
  orthogonality constraints.
\newblock {\em Mathematical Programming}, 198(1):855--897, 2023.

\bibitem{Jiang2025inexact}
B.~Jiang, M.~Xu, X.~Cai, and Y.-F. Liu.
\newblock An inexact proximal framework for nonsmooth {R}iemannian
  difference-of-convex optimization.
\newblock {\em arXiv:2509.08561}, 2025.

\bibitem{LeCun1998gradient}
Y.~LeCun, L.~Bottou, Y.~Bengio, and P.~Haffner.
\newblock Gradient-based learning applied to document recognition.
\newblock {\em Proceedings of the IEEE}, 86(11):2278--2324, 1998.

\bibitem{Li2025riemannian}
J.~Li, S.~Ma, and T.~Srivastava.
\newblock A {R}iemannian alternating direction method of multipliers.
\newblock {\em Mathematics of Operations Research}, 50(4):3222--3242, 2025.

\bibitem{Li2021weakly}
X.~Li, S.~Chen, Z.~Deng, Q.~Qu, Z.~Zhu, and A.~M.-C. So.
\newblock Weakly convex optimization over {S}tiefel manifold using {R}iemannian
  subgradient-type methods.
\newblock {\em SIAM Journal on Optimization}, 31(3):1605--1634, 2021.

\bibitem{Li2026unsupervised}
Y.~Li, D.~Sun, and L.~Zhang.
\newblock Unsupervised feature selection via nonnegative orthogonal constrained
  regularized minimization.
\newblock {\em Journal of Machine Learning Research}, 27(39):1--44, 2026.

\bibitem{Liu2016smoothing}
Y.-F. Liu, S.~Ma, Y.-H. Dai, and S.~Zhang.
\newblock A smoothing {SQP} framework for a class of composite {$L_q$}
  minimization over polyhedron.
\newblock {\em Mathematical Programming}, 158(1):467--500, 2016.

\bibitem{Peng2023riemannian}
Z.~Peng, W.~Wu, J.~Hu, and K.~Deng.
\newblock {R}iemannian smoothing gradient type algorithms for nonsmooth
  optimization problem on compact {R}iemannian submanifold embedded in
  {E}uclidean space.
\newblock {\em Applied Mathematics $\&$ Optimization}, 88(3):85, 2023.

\bibitem{Qian2024error}
Y.~Qian, S.~Pan, and L.~Xiao.
\newblock Error bound and exact penalty method for optimization problems with
  nonnegative orthogonal constraint.
\newblock {\em IMA Journal of Numerical Analysis}, 44(1):120--156, 2024.

\bibitem{Tsakiris2018dual}
M.~C. Tsakiris and R.~Vidal.
\newblock Dual principal component pursuit.
\newblock {\em Journal of Machine Learning Research}, 19(18):1--49, 2018.

\bibitem{Wang2025decentralized}
L.~Wang, L.~Bao, and X.~Liu.
\newblock A decentralized proximal gradient tracking algorithm for composite
  optimization on {R}iemannian manifolds.
\newblock {\em Journal of Machine Learning Research}, 26(106):1--37, 2025.

\bibitem{Wang2022decentralized}
L.~Wang and X.~Liu.
\newblock Decentralized optimization over the {S}tiefel manifold by an
  approximate augmented {L}agrangian function.
\newblock {\em IEEE Transactions on Signal Processing}, 70:3029--3041, 2022.

\bibitem{Wang2023smoothing}
L.~Wang and X.~Liu.
\newblock Smoothing gradient tracking for decentralized optimization over the
  {S}tiefel manifold with non-smooth regularizers.
\newblock In {\em Proceedings of the 62nd IEEE Conference on Decision and
  Control (CDC)}, pages 126--132. IEEE, 2023.

\bibitem{Wang2025distributionally}
L.~Wang, X.~Liu, and X.~Chen.
\newblock The distributionally robust optimization model of sparse principal
  component analysis.
\newblock {\em arXiv:2503.02494}, 2025.

\bibitem{Wang2025support}
L.~Wang, X.~Liu, and X.~Chen.
\newblock A support-set algorithm for optimization problems with nonnegative
  and orthogonal constraints.
\newblock {\em arXiv:2511.03443}, 2025.

\bibitem{Wang2024seeking}
L.~Wang, X.~Liu, and Y.~Zhang.
\newblock Seeking consensus on subspaces in federated principal component
  analysis.
\newblock {\em Journal of Optimization Theory and Applications}, 203:529--561,
  2024.

\bibitem{Zhai2020complete}
Y.~Zhai, Z.~Yang, Z.~Liao, J.~Wright, and Y.~Ma.
\newblock Complete dictionary learning via $\ell_4$-norm maximization over the
  orthogonal group.
\newblock {\em Journal of Machine Learning Research}, 21(165):1--68, 2020.

\bibitem{Zhang2024riemannian}
C.~Zhang, X.~Chen, and S.~Ma.
\newblock A {R}iemannian smoothing steepest descent method for non-{L}ipschitz
  optimization on embedded submanifolds of $\mathbb{R}^n$.
\newblock {\em Mathematics of Operations Research}, 49(3):1710--1733, 2024.

\bibitem{Zheng2025new}
Z.~Zheng, X.~Yu, S.~Ma, and L.~Xue.
\newblock A new inexact manifold proximal linear algorithm with adaptive
  stopping criteria.
\newblock {\em arXiv:2508.19234}, 2025.

\end{thebibliography}

\addcontentsline{toc}{section}{References}

\end{document}